\documentclass[12pt,reqno,natbib,runningheads]{amsart}
\usepackage{graphicx}
\usepackage{amssymb}
\usepackage{amsfonts}
\usepackage{amsmath}
\usepackage{graphicx}

\textheight 235mm
\topmargin  -5mm
\textwidth 160mm
\oddsidemargin 0mm
\evensidemargin 0mm

\numberwithin{equation}{section}
\numberwithin{figure}{section}
\numberwithin{table}{section}

\newtheorem{theorem}{Theorem}[section]
\newtheorem{corollary}{Corollary}[section]

\newtheorem{lemma}{Lemma}[section]
\newtheorem{example}{Example}[section]

\newtheorem{note}{Note}[section]

\begin{document}

\begin{center}
\textbf{\Large Coupled risk measures and their empirical estimation when losses follow heavy-tailed distributions} \bigskip

\bigskip

{\large
Abdelhakim Necir$^{\textrm{a}}$
and Ri\v{c}ardas Zitikis$^{\textrm{b},*}$}
\medskip

{$^{\textrm{a}}$\textit{Laboratory of Applied Mathematics,
Mohamed Khider University of Biskra, \break Biskra 07000, Algeria}
\smallskip

$^{\textrm{b}}$\textit{Department of Statistical and Actuarial Sciences,
University of Western Ontario,  \break
London, Ontario N6A5B7, Canada}
}

\end{center}

\bigskip

\begin{quote}
\noindent\textbf{Abstract.} Considerable literature has been devoted to developing statistical inferential results for risk measures, especially for those that are of the form of $L$-functionals. However, practical and theoretical considerations have highlighted quite a number of risk measures that are of the form of ratios, or even more complex combinations, of two $L$-functionals. In the present paper we call such combinations `coupled risk measures' and develop a statistical inferential theory for them when losses follow heavy-tailed distributions. Our theory implies -- at a stroke -- statistical inferential results for absolute and relative distortion risk measures, weighted premium calculation principles, as well as for many indices of economic inequality that have appeared in the econometric literature.
\medskip

\noindent
\textit{JEL classification:} C13, C14, C16, D81
\medskip

\noindent
\textit{Keywords:} Risk measure; Heavy-tailed distribution; Distortion risk measure;  Weighted risk measure; Proportional hazards transform; Conditional tail expectation; Premium calculation principle; Index of economic inequality; Statistical inference.
\end{quote}

\vfill

\noindent
{\small
$^{\textrm{*}}$Corresponding author:
Tel.: +1 519 432 7370;
fax.: +1 519 661 3813.
\\
E-mails:
\texttt{necirabdelhakim@yahoo.fr} (A.~Necir),
\texttt{zitikis@stats.uwo.ca} (R.~Zitikis)
}

\newpage

\section{\bf Introduction}
\label{sect-1}

Risk measures or premium calculation principles are functionals $\Pi: \mathcal{F} \to [0,+\infty]$ from the set $\mathcal{F}$ of all `loss' cumulative distribution functions (cdfs) $F$  to the extended non-negative real line $[0,+\infty]$ (e.g., Denuit et al. 2005). The corresponding random variables  $X \sim F$ are called `loss' random variables, and we assume them to be non-negative throughout this paper. We also assume that the variables $X$ follow, or are modeled with, continuous cdfs. Given a cdf $F$, the corresponding quantile function $Q:(0,1)\to [0,\infty )$ is defined by
\[
Q(t)=\inf\{x:\, F(x)\ge t\} .
\]
The quantile function $Q$ plays a pivotal role in defining numerous risk measures, and is a well known risk measure itself, called the value at risk and denoted by $\textrm{VaR}[t,F]$ (e.g., Denuit et al. 2005). Other illustrative examples of risk measures follow next.

\begin{example}\label{ex-1.1}\rm
Let $g:[0,1]\to [0,1]$ be a distortion function, that is, a non-decreasing function such that $g(0)=0$ and $g(1)=1$. The distortion risk measure (Denneberg, 1994; Wang, 1995, 1998) is defined by the formula
\[
\Pi_{d}[F]= \int_0^{\infty }g(1-F(x))dx,
\]
which can be rewritten in terms of the quantile function $Q$ as follows:
\begin{equation}
\Pi_{d}[F]=\int_{0}^{1} Q(s)d\Psi(s),
\label{pcp-dist}
\end{equation}
where $\Psi(s)=-g(1-s)$. Assuming that $g$ is left-continuous, the function $\Psi $ is right-continuous, and Note \ref{nt-1} below will clarify our reason for imposing this type of continuity. Hence, $\Pi_{d}[F]$ is an $L$-functional, a property that was highlighted and utilized by Jones and Zitikis (2003) and subsequently used by many researchers for developing statistical inferential results for distortion risk measures in light- and heavy-tailed settings.
\end{example}

\begin{note}\label{nt-1}\rm
To make this paper less cumbersome, we always write integrals as $\int_a^b g(x)dh(x)$ irrespectively of whether integrators $h$ are right- or left-continuous functions. What we have in mind behind such integrals is
$\int_{(a,b]} g(x)dh(x)$ in the case of right-continuous $h$ and $\int_{[a,b)} g(x)dh(x)$ in the case of left-continuous $h$. For example, the integral $\int_{0}^{1} Q(s)d\Psi(s)$ on the right-hand side of equation (\ref{pcp-dist}) means $\int_{(0,1]} Q(s)d\Psi(s)$ because $\Psi $ is right-continuous as noted below equation (\ref{pcp-dist}).
\end{note}

\begin{example}[Continuation of Example \ref{ex-1.1}]\label{ex-1.1b}\rm
Since $g$ is non-decreasing, the function $\Psi $ is also non-decreasing. In the special case when
\begin{equation}
\Psi(s)=-(1-s)^{1/\rho}
\label{psi-pht}
\end{equation}
with parameter $\rho \ge 1$, the distortion risk measure $\Pi_{d}[F]$ is called the proportional hazards transform and frequently denoted by $\textrm{PHT}[\rho,F]$. When
\begin{equation}
\Psi(s)={(s-t)_{+} \over 1-t }
\label{psi-cte}
\end{equation}
with parameter $t \in [0,1)$, then the risk measure $\Pi_{d}[F]$ is known as the tail value at risk and frequently denoted by  $\textrm{TVaR}[t,F]$. (We have used the classical notation $(s-t)_{+}$ for the positive part of $s-t$.) Since the cdf $F$ is continuous throughout this paper by assumption, the risk measure $\textrm{TVaR}[t,F]$ coincides with the conditional tail expectation  $\textrm{CTE}[t,F]=\mathbf{E}[X|X>Q(t)]$. For details on these and other risk measures, we refer to, e.g., Denuit et al. (2005) and references therein. This concludes Example \ref{ex-1.1}.
\end{example}

\begin{example}\label{ex-1.2a}\rm
The relative distortion risk measure (Wang, 1998) is given by the formula
\[
\Pi_{rd}[F]= {\Pi_{d}[F] \over \mathbf{E}[X]}.
\]
 The measure can be expressed (see equations of Example \ref{ex-1.1}) in terms of the quantile function $Q$ as follows:
\begin{equation}
\Pi_{rd}[F]={\int_{0}^{1} Q(s)d\Psi(s) \over \int_{0}^{1} Q(s)ds } .
\label{pcp-reldist}
\end{equation}
This is a ratio of $L$-functionals. With this note, we conclude Example \ref{ex-1.2a}.
\end{example}

\begin{example}\label{ex-1.2b}\rm
Let $w:[0,\infty )\to [0,\infty )$ be a non-decreasing function, called weight function. The weighted risk measure, or the weighted premium calculation principle (Furman and Zitikis, 2008), is given by the formula
\[
\Pi_{w}[F]={\frac{\mathbf{E}[Xw(X)] }{\mathbf{E}[w(X)]}}.
\]
For details on $\Pi_{w}[F]$ and its extensions, we refer to Furman and Zitikis (2009). With $U$ denoting a uniform on $[0,1]$ random variable, the variables $X$ and $Q(U)$ are equal in distribution, and so we can rewrite $\Pi_{w}[F]$ in terms of the quantile function $Q$ as follows:
\begin{equation}
\Pi_{w}[F]
= {\frac{\int_{0}^{1} H_{1}\circ Q(s)ds }{\int_{0}^{1} H_{2}\circ Q(s)ds}},
\label{pcp-weig}
\end{equation}
where $H_{1}(x)=xw(x)$ and $H_{2}(x)=w(x)$ with $H_{i}\circ Q(s)$
denoting the composition $H_{i}(Q(s))$ of the two functions $H_{i}$ and $Q$.
This concludes Example \ref{ex-1.2b}.
\end{example}

All of the aforementioned risk measures are special cases of the risk measure
\begin{equation}
\Pi_{r}[F]={\int_{0}^{1} H_{1}\circ Q(s)d\Psi_1(s) \over \int_{0}^{1} H_{2}\circ Q(s)d\Psi_2(s)  },
\label{gen-ratio}
\end{equation}
where $H_i:[0,\infty )\to [0,\infty ) $, $i=1,2$, are two non-decreasing and  left-continuous functions, and $\Psi_i:[0,1]\to \mathbf{R} $, $i=1,2$, are two non-decreasing and right-continuous functions. In addition to the aforementioned (absolute) PHT and CTE/TVaR risk measures, the ratio risk measure $\Pi_{r}[F]$ also includes the relative PHT risk measure $\int_0^{\infty }(1-F(x))^{1/\rho }dx / \mathbf{E}[X]$ and the relative CTE/TVaR risk measure $\mathbf{E}[X|X>Q(t)]/ \mathbf{E}[X]$. Many indices of economic inequality are also of form (\ref{gen-ratio}), as elucidated by Greselin et al. (2009). Another example of $\Pi_{r}[F]$ will follow after a note.

\begin{note}\rm
Some applications might lead to non-monotonic functions $H_{i}^{\circ}$. In this case we need to assume that the functions are of bounded variation, and to also require that each of the two non-decreasing components $H_{i}^{\ast}$ and $H_{i}^{\ast\ast}$ in the decomposition $H_{i}^{\circ}=H_{i}^{\ast}-H_{i}^{\ast\ast}$ satisfies the conditions to be imposed on $H_{i}$ in our following considerations. Same arguments apply to non-monotonic functions $\Psi_{i}$, assuming in particular that they are of bounded variation.
\end{note}

\begin{example}\label{ex-zenga}\rm
The risk measure $\Pi_{r}[F]$ includes, as a special case, the ratio
\[
R_F(p)
={\mathbf{E}[X|X \le Q(p)] \over \mathbf{E}[X|X > Q(p)] },
\]
which defines the Zenga curve $Z_F(p)$, $0\le p \le 1$, via the equation $Z_F(p)=1-R_F(p)$. We refer to the original papers by Zenga (1987, 2007) for interpretations and other details related to $Z_F$, and to Greselin et al. (2010) for statistical inferential results when (income) distributions are light-tailed. It is useful to rewrite $R_F(p)$ in terms of the aforementioned CTE/TVaR and PHT risk measures, which is accomplished by the equation
\[
\mathbf{E}[X|X \le Q(p)]=\bigg ( 1- {1\over p} \bigg )\mathbf{E}[X|X > Q(p)]+{1\over p}\mathbf{E}[X].
\]
Hence,
\[
R_F(p)=\mathcal{H}_p \big ( \mathbf{E}[X|X > Q(p)], \mathbf{E}[X] \big ),
\]
where the coupling function $\mathcal{H}_p:[0,\infty ) \times [0,\infty ) \to [0,\infty )$ is given by the formula
\[
\mathcal{H}_p(x,y)=1- {1\over p} +{1\over p}\bigg ( { y \over x } \bigg ).
\]
This concludes Example \ref{ex-zenga}.
\end{example}

Hence, the class of interesting and useful coupling functions spans beyond ratios $x/y$. This suggests considering a most general `coupled risk measure,' defined as follows: Let $\mathcal{H}:[0,\infty ) \times [0,\infty ) \to [0,\infty )$ be a `coupling' function, which couples two basic risk measures
\[
L_{i}[F]=\int_{0}^{1} H_{i}\circ Q(s)d\Psi_{i}(s), \quad i=1,2,
\]
into one
\[
\Pi[F]=\mathcal{H}(L_{1}[F],L_{2}[F]),
\]
which we call the coupled risk measure. Our task in this paper is to develop a statistical inferential theory for this risk measure when loss variables follow heavy-tailed distributions. The corresponding theory in the case of light-tailed distributions is already available in the literature, and we refer to Brazauskas et al. (2007, 2009) and references therein for details.

The rest of this paper is organized as follows. In Section \ref{sect-2} we introduce an empirical estimator, denoted by $\widehat{\Pi}_{n}$, of the coupled risk measure $\Pi[F]$ when losses are heavy-tailed. In Section \ref{sect-3a} we establish weak approximations and thus asymptotic normality of the estimator $\widehat{\Pi}_{n}$ under several sets of conditions. In Section \ref{sect-3b} we give illustrative examples of the aforementioned weak approximations. Proofs are given in Section \ref{sect-4}.

\section{\bf Constructing an estimator for $\Pi[F]$}
\label{sect-2}

The cdf $F$ is unknown, and thus the risk measure $\Pi[F]$ is unknown. Estimating $\Pi\lbrack F]$ crucially relies on estimating $L_{i}[F]$. To work out our initial intuition on the topic, we start with a brief discussion of what happens within the classical CLT-like framework, and why we need to depart from it.

It is natural to construct an estimator for $L_{i}[F]$ by simply replacing the (unknown) population cdf $F$ by its empirical counterpart $F_{n}$, which gives the weighted sum
\[
\widetilde{L}_{i,n}=\sum_{j=1}^{n}c_{i,j,n}  H_{i}(X_{j:n})
\]
of the (observable) random variables $H_{i}(X_{j:n})$, $1\le j \le n$,
with the coefficients
\[
c_{i,j,n}=\Psi_{i}\left(  \frac{j}{n}\right)-\Psi_{i}\left(  \frac{j-1}{n}\right) .
\]
It is known from the theory of $L$-statistics (e.g., Shorack and Wellner, 1986) that under some assumptions on $H_{i}$, $\Psi_{i}$, and $Q$, the following asymptotic-normality result holds:
\begin{equation}
\sqrt{n}\,(\widetilde{L}_{i,n}-L_{i}[F])\rightarrow_{d}\mathcal{N}(0,\sigma_{F}^{2}),
\label{L-stats0}
\end{equation}
provided that the variance
\[
\sigma_{F}^{2}=\int_{0}^{1}\int_{0}^{1}(H_{i}\circ Q)'(s)(H_{i}\circ
Q)'(t)(\min(s,t)-st)d\Psi_{i}(s)d\Psi_{i}(t)
\]
is finite. In the case of the CTE/TVaR risk measure, this scenario has been thoroughly investigated by Brazauskas et al. (2008), and for general $L$-type risk measures by Brazauskas et al. (2007, 2009). The finiteness of the variance $\sigma_{F}^{2}$ is, however, often violated by heavy-tailed distributions. Hence, we need to develop another approach for deriving statistical inferential results in the case of such distributions, and we shall do so next.

We note at the outset that a number of special cases that are covered by the coupled risk measure $\Pi[F]$ have been investigated in the literature within the heavy-tailed framework. For example, Peng (2001) has established a ground-breaking statistical inferential theory for the net premium. Necir et al. (2007), Necir and Meraghni (2009) have developed an analogous theory for the proportional hazards transform. Necir et al. (2010) have tackled the conditional tail expectation. Necir and Meraghni (2010) have devoted their research to general $L$-functionals. All of these risk measures are special cases of the above introduced coupled risk measure $\Pi[F]$, and thus our inferential theory developed in the following sections will cover all these special cases.

Thus, our task now is to modify the classical estimator $\widetilde{L}_{i,n}$ in such a way that it would work in the heavy-tailed setting. For this, keeping in mind that high quantiles are estimated differently from the intermediate ones of the (observable) random variables $H_{i}(X_{1}),\dots,H_{i}(X_{n})$, we introduce integers $k=k_{n}$, which depend on $n$ and are such that
\begin{equation}
k\rightarrow\infty \quad \textrm{and} \quad k/n\rightarrow0,
\label{con-00e}
\end{equation}
with further assumptions specified later in this paper. Next we write the decomposition $L_{i}[F]=L_{i,n}(1)+L_{i,n}(2)$ with
\[
L_{i,n}(1)=\int_{0}^{1-k/n}H_{i}\circ Q(s)d\Psi_{i}(s)
\]
and
\[
L_{i,n}(2)=\int_{1-k/n}^{1}H_{i}\circ Q(s)d\Psi_{i}(s).
\]
Finally, we replace the quantile function $Q$ in the definition of $L_{i,n}(1)$ by the
classical non-parametric estimator $Q_{n}$, and then replace $H_{i}\circ Q$ in the definition of $L_{i,n}(2)$ by any of the many high-quantile estimators available in the literature (see, e.g., Beirlant et al., 2004; Castillo et al. 2005; and references therein), which we denote by $\widehat{H_{i}\circ Q}$. Throughout the present paper we work with the Weissman (1978) estimator
\[
\widehat{H_{i}\circ Q}(s)=\left(  \frac{k}{n}\right)  ^{\widehat{\gamma}_{i}}H_{i}(X_{n-k:n})(1-s)^{-\widehat{\gamma}_{i}},\quad s\in\big (1-k/n,1\big ),
\]
where $\widehat{\gamma}_{i}$ is the Hill (1975) estimator of the tail index $\gamma_{i}\in(1/2,1)$ defined by the formula
\[
\widehat{\gamma}_{i}={\frac{1}{k}}\sum_{j=1}^{k}
\log \bigg ( {H_{i}(X_{n-j+1:n}) \over H_{i}(X_{n-k:n})} \bigg ).
\]
In summary, we have arrived at the estimator
\begin{equation}
\widehat{L}_{i,n}=\widehat{L}_{i,n}(1)+\widehat{L}_{i,n}(2),
\label{decomp-emp}
\end{equation}
where
\[
\widehat{L}_{i,n}(1)   =\sum_{j=1}^{n-k}c_{i,j,n}  H_{i}\left(X_{j:n}\right)
\]
and
\[
\widehat{L}_{i,n}(2) = c_{i,\vartriangle,n} H_{i}(X_{n-k:n})
\]
with
\[
c_{i,\vartriangle,n}=\left(  \frac{k}{n}\right)
^{\widehat{\gamma}_{i}}\int_{1-k/n}^{1}(1-s)^{-\widehat{\gamma}_{i}}d\Psi
_{i}(s).
\]
Note the similarity between $\widehat{L}_{i,n}(1)$ and the `classical' estimator $\widetilde{L}_{i,n}$, but $\widehat{L}_{i,n}(2)$ and $\widetilde{L}_{i,n}$ are quite different. Replacing $L_{1}[F]$ and $L_{2}[F]$ in $\mathcal{H}(L_{1}[F],L_{2}[F])$ by the above constructed $\widehat{L}_{1,n}$ and $\widehat{L}_{2,n}$, we obtain the estimator
\[
\widehat{\Pi}_{n}=\mathcal{H}(\widehat{L}_{1,n},\widehat{L}_{2,n})
\]
of $\Pi[F]$. In the next section we shall establish the asymptotic distribution of $\widehat{\Pi}_{n}$.

For developing statistical inferential results (e.g., confidence intervals and hypothesis tests) for $\Pi[F]$, we need to derive asymptotic distributions of $\widehat{L}_{1,n}$ and $\widehat{L}_{2,n}$, which we do in the next section.
The following regular variation condition plays a decisive role in the derivations. Namely, let the function $H_{i}\circ Q$ be regularly varying at $1$ with index $-\gamma_{i}<0$, that is,
\begin{equation}
\lim_{\epsilon\downarrow0}{\frac{H_{i}\circ Q(1-\epsilon s)}{H_{i}\circ
Q(1-\epsilon)}}=s^{-\gamma_{i}}
\label{tag-2u}
\end{equation}
for every $s>0$. This is the so-called first-order condition, which is sufficient  for proving consistency of the estimator $\widehat{\Pi}_{n}$. For establishing the asymptotic distribution of the estimator, we need a second-order condition, which specifies the rate of convergence in the first-order condition. Namely, assume that, for every $s>0$,
\begin{equation}
\lim_{\epsilon\downarrow0}{\frac{1}{A_{i}(1/\epsilon)}}\left( {\frac {H_{i}\circ Q(1-\epsilon s)}{H_{i}\circ Q(1-\epsilon)}}-s^{-\gamma_{i}}\right)  =s^{-\gamma_{i}}\frac{s^{-\omega_{i}}-1}{\omega_{i}} ,
\label{tag-2A}
\end{equation}
where $\omega_{i}\leq0$ is the so-called second-order parameter and
$x\mapsto A_{i}(x)$ is a function that does not change its
sign for all sufficiently large $x$ and converges to $0$ when $x\uparrow \infty $.
When $\omega_{i}=0$, then the ratio on the right-hand side of equation
(\ref{tag-2A}) is interpreted as $\log s$.

\section{\bf Establishing the asymptotic distribution of $\widehat{\Pi}_{n}$}
\label{sect-3a}

We assume that the first partial derivatives of the functional $\mathcal{H}$ are continuous at the point $(L_{1}[F],L_{2}[F])$, and not both of them are zero at the point. By $\mathcal{H}^{(1)}_{x}(L_{1}[F],L_{2}[F])$ and $\mathcal{H}^{(1)}_{y}(L_{1}[F],L_{2}[F])$ we denote the first partial
derivatives  of $\mathcal{H}(x,y)$ with respect to $x$ and $y$, respectively, both evaluated at the point $(x,y)=(L_{1}[F],L_{2}[F])$. The following lemma plays a fundamental role in our considerations.

\begin{lemma}\label{lemma-funda}
Assume that there are standard Brownian bridges $B_{n}$ defined on a possibly different probability space than the original one, and also assume that, for each $i=1$ and $i=2$, there are constants $b_{i,n}$ and $\lambda_{i}$, and linear functionals $\ell_{i,n}$ defined on the space of all functions such that
\begin{equation}
b_{i,n}\big(\widehat{L}_{i,n}-L_{i}[F]\big)
=\lambda_{i}+\ell_{i,n}(B_{n})+o_{\mathbf{P}}(1)
\label{approxim-1}
\end{equation}
when $n\rightarrow\infty$. Furthermore, let the constants $b_{i,n}$ be such that
\begin{equation}
{\frac{b_{2,n}}{b_{1,n}+b_{2,n}}}\rightarrow\delta
\label{approxim-2}
\end{equation}
for some $\delta\in [0,1]$. Then, when $n\rightarrow\infty$,
\begin{equation}
{\frac{b_{1,n}b_{2,n}}{b_{1,n}+b_{2,n}}}\big(\widehat{\Pi}_{n}-\Pi[F]\big)
=\lambda+\ell_n(B_{n})+o_{\mathbf{P}}(1)
\label{approxim-1a}
\end{equation}
with the bias term
\[
\lambda=\delta\mathcal{H}^{(1)}_{x}(L_{1}[F],L_{2}[F])\lambda
_{1}+(1-\delta)\mathcal{H}^{(1)}_{y}(L_{1}[F],L_{2}[F])\lambda_{2}%
\]
and the linear functional
\[
\ell_n=\delta\mathcal{H}^{(1)}_{x}(L_{1}[F],L_{2}[F])\ell_{1,n}
+(1-\delta)\mathcal{H}^{(1)}_{y}(L_{1}[F],L_{2}[F])\ell_{2,n}.
\]
\end{lemma}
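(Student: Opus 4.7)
The plan is to apply a bivariate delta method at the point $(L_{1}[F],L_{2}[F])$. Since $\mathcal{H}$ has continuous first partials there and (\ref{approxim-1}) forces $\widehat{L}_{i,n}\to L_{i}[F]$ in probability (one expects $b_{i,n}\to\infty$), $\mathcal{H}$ is totally differentiable at the point, so a first-order Taylor expansion gives
\[
\widehat{\Pi}_{n}-\Pi[F]=\mathcal{H}^{(1)}_{x}\bigl(\widehat{L}_{1,n}-L_{1}[F]\bigr)+\mathcal{H}^{(1)}_{y}\bigl(\widehat{L}_{2,n}-L_{2}[F]\bigr)+R_{n},
\]
with remainder $|R_{n}|\le \epsilon_{n}\bigl(|\widehat{L}_{1,n}-L_{1}[F]|+|\widehat{L}_{2,n}-L_{2}[F]|\bigr)$ for some $\epsilon_{n}=o_{\mathbf{P}}(1)$, produced by the integrated-gradient form of the Peano remainder.

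The central algebraic step is then to multiply through by $c_{n}:=b_{1,n}b_{2,n}/(b_{1,n}+b_{2,n})$ and rewrite
\[
c_{n}\bigl(\widehat{L}_{1,n}-L_{1}[F]\bigr)=\frac{b_{2,n}}{b_{1,n}+b_{2,n}}\cdot b_{1,n}\bigl(\widehat{L}_{1,n}-L_{1}[F]\bigr),
\]
and symmetrically for $i=2$. Substituting (\ref{approxim-1}), invoking (\ref{approxim-2}), and using the tacit tightness of $\ell_{i,n}(B_{n})$ that is built into (\ref{approxim-1}), these products equal $\delta\bigl(\lambda_{1}+\ell_{1,n}(B_{n})\bigr)+o_{\mathbf{P}}(1)$ and $(1-\delta)\bigl(\lambda_{2}+\ell_{2,n}(B_{n})\bigr)+o_{\mathbf{P}}(1)$, respectively. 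Multiplying by $\mathcal{H}^{(1)}_{x}$ and $\mathcal{H}^{(1)}_{y}$ and summing produces exactly the advertised bias $\lambda$ and linear functional $\ell_{n}$.

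The only delicate point is to verify $c_{n}R_{n}=o_{\mathbf{P}}(1)$. The identity $c_{n}/b_{i,n}=b_{3-i,n}/(b_{1,n}+b_{2,n})\le 1$ yields $c_{n}|\widehat{L}_{i,n}-L_{i}[F]|=O_{\mathbf{P}}(1)$ for $i=1,2$, whence
\[
c_{n}|R_{n}|\le \epsilon_{n}\bigl(c_{n}|\widehat{L}_{1,n}-L_{1}[F]|+c_{n}|\widehat{L}_{2,n}-L_{2}[F]|\bigr)=o_{\mathbf{P}}(1)\cdot O_{\mathbf{P}}(1)=o_{\mathbf{P}}(1).
\]
I expect this to be the step that requires the most care, because it must remain valid in the boundary regimes $\delta\in\{0,1\}$, where one coordinate carries all of the asymptotic variance and the other might naively look as if it could spoil the expansion; the bounded-factor identity $c_{n}/b_{i,n}\le 1$ is precisely what keeps the weaker coordinate under control in those regimes.
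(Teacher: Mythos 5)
The paper states Lemma~\ref{lemma-funda} but does not actually supply a proof for it anywhere in Section~\ref{sect-4} (the proofs section goes directly to Theorems~\ref{theor-1a}, \ref{theor-1ab}, \ref{theor-1b} and the auxiliary Lemmas~\ref{lemma4.1} and \ref{lemma4.2}), so there is no written argument to compare against; the lemma is evidently regarded as a routine consequence of the delta method. Your reconstruction is exactly that routine argument and is correct: the first-order expansion with Peano remainder, the factorization $c_n/b_{i,n}=b_{3-i,n}/(b_{1,n}+b_{2,n})\le 1$ to pass from the individual normalizations to the harmonic-mean normalization, and the bound $c_n|R_n|=o_{\mathbf{P}}(1)\cdot O_{\mathbf{P}}(1)$ are all as one would write them. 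You are also right to flag the two tacit hypotheses on which the lemma actually relies, namely that $b_{i,n}\to\infty$ (so that consistency $\widehat{L}_{i,n}\to_{\mathbf{P}} L_i[F]$ follows and the Taylor expansion may be applied) and that $\lambda_i+\ell_{i,n}(B_n)=O_{\mathbf{P}}(1)$ (so that replacing the finite-$n$ weight $b_{2,n}/(b_{1,n}+b_{2,n})$ by its limit $\delta$ costs only $o_{\mathbf{P}}(1)$); the paper leaves both implicit, although both hold in every application the paper considers.
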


Lemma \ref{lemma-funda} reduces our task of deriving the asymptotic distribution of the coupled risk measure $\Pi[F]$ to establishing statement (\ref{approxim-1}) for both $i=1$ and $i=2$. Recall that $L_{i}=L_{i,n}(1)+L_{i,n}(2)$ and $\widehat{L}_{i,n}=\widehat{L}_{i,n}(1)+\widehat{L}_{i,n}(2)$. These two equations further reduce our task to showing that, with the above defined constants $b_{i,n}$ and Brownian bridges $B_{n}$, for each $\Delta =1$ and $\Delta =2$ there are constants $\lambda_{i}(\Delta)$ and linear functionals $\ell_{i,n}(\Delta;\bullet)$ such that, when $n\rightarrow\infty$,
\begin{equation}
b_{i,n}\big(\widehat{L}_{i,n}(\Delta)-L_{i,n}(\Delta)\big)
=\lambda_{i}(\Delta)+\ell_{i,n}(\Delta;B_{n})+o_{\mathbf{P}}(1).
\label{approxim-4}
\end{equation}
In view of this assumption, the earlier noted constants $\lambda_{i}$ and linear functionals $\ell_{i}$ are the sums
\begin{equation}
\lambda_{i}=\lambda_{i}(1)+\lambda_{i}(2)
\label{approxim-4con}
\end{equation}
and
\begin{equation}
\ell_{i,n}=\ell_{i,n}(1;\bullet)+\ell_{i,n}(2;\bullet).
\label{approxim-4lin}
\end{equation}
In various generalities, statement (\ref{approxim-4}) when $\Delta =1$ will be established in Theorems \ref{theor-1ab} and \ref{theor-1a} below, whereas Theorem \ref{theor-1b} and Corollary \ref{utheor-1b} will deal with the case $\Delta=2$.

\begin{note}\label{probab-space}\rm
In all of the following results, the Brownian bridges $B_{n}$ and the probability space on which they are defined can be chosen same, and this is a crucial property as it allows us to combine various asymptotic results into one.
\end{note}

In what follows we use the notation
\[
D_{i,n}=H_{i}\circ Q(1-k/n)\bigg({k \over n}\bigg )^{\gamma_{i}}\int_{1-k/n}^{1}(1-s)^{-\gamma
_{i}}d\Psi_{i}(s)
\]
and assume that, when $n\rightarrow\infty$,
\begin{equation}
\sqrt{{n\over k}} ~ D_{i,n}\rightarrow\infty .
\label{con-0e}
\end{equation}

The next theorem establishes statement (\ref{approxim-4}) when $\Delta =1$ under a very weak assumption on the function $F\circ H_{i}^{-1}$, assuming only that it is continuous, but this generality is achieved at the expense of requiring $\Psi_{i}(s)=s$ for all $s\in [0,1]$, which is indeed a restriction though luckily satisfied by a number of important examples.

\begin{theorem}\label{theor-1ab}
Let the function $F\circ H_{i}^{-1}$ be continuous and $\Psi_{i}(s)=s$ for all $s\in [0,1]$. Then there are Brownian bridges $B_{n}$ (on a possibly different probability space) such that
\begin{equation}
\frac{\sqrt{n}(\widehat{L}_{i,n}(1)-L_{i,n}(1)}{(n/k)^{1/2}D_{i,n}}=\ell
_{i,n}(1;B_{n})+o_{\mathbf{P}}(1)
\label{equa-10cc1}
\end{equation}
with
\[
\ell_{i,n}(1;B_{n})={\frac{-1}{(n/k)^{1/2}D_{i,n}}}\int_{0}^{1-k/n}%
B_{n}(s)dH_{i}\circ Q(s).
\]
\end{theorem}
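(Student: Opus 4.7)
The plan is to reduce the analysis to the uniform empirical process via the quantile transform, extract a linear-in-$\alpha_n$ main term through Fubini/integration by parts, and then substitute a Brownian bridge via a strong embedding. Writing $\psi := H_{i}\circ Q$ and invoking the distributional identity $X_{j:n}=Q(U_{j:n})$ with $U_j$ i.i.d.\ uniform on $[0,1]$, the hypothesis $\Psi_i(s)=s$ turns the weights $c_{i,j,n}$ into $1/n$, so that $\widehat{L}_{i,n}(1)=\int_{0}^{U_{n-k:n}}\psi(u)\,dG_n(u)$ while $L_{i,n}(1)=\int_{0}^{1-k/n}\psi(u)\,du$, where $G_n$ is the uniform empirical cdf. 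The continuity of $F\circ H_i^{-1}$ guarantees that $\psi$ has no atoms in the relevant range, so Fubini applied to $\psi(u)=\psi(0+)+\int_{0+}^{u}d\psi(v)$ gives, after the matching boundary constants $\psi(0+)(1-k/n)$ cancel,
\begin{equation*}
\widehat{L}_{i,n}(1) - L_{i,n}(1) = \int_{0}^{1-k/n}(v-G_n(v))\,d\psi(v) + \int_{1-k/n}^{U_{n-k:n}}\bigl(1-k/n - G_n(v)\bigr)\,d\psi(v).
\end{equation*}

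Multiplying by $\sqrt{n}$ converts the first integral into $-\int_{0}^{1-k/n}\alpha_n(v)\,d\psi(v)$, where $\alpha_n(v)=\sqrt{n}(G_n(v)-v)$ is the uniform empirical process; this will supply $\ell_{i,n}(1;B_n)$ once $\alpha_n$ is replaced by a Brownian bridge and we divide by $(n/k)^{1/2}D_{i,n}$. The boundary term is negligible: on the interval between $1-k/n$ and $U_{n-k:n}$ the factor $|1-k/n-G_n(v)|$ is dominated by $|\alpha_n(1-k/n)|/\sqrt{n}=O_{\mathbf{P}}(\sqrt{k}/n)$, while the regular variation (\ref{tag-2u}) of $\psi$ together with the intermediate-order CLT $\sqrt{n/k}(U_{n-k:n}-(1-k/n))=O_{\mathbf{P}}(1)$ gives $|\psi(U_{n-k:n})-\psi(1-k/n)|=O_{\mathbf{P}}(\psi(1-k/n)/\sqrt{k})$. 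Using $D_{i,n}\sim \psi(1-k/n)(k/n)/(1-\gamma_i)$, a short calculation shows the boundary contribution is of order $1/\sqrt{k}$ relative to $(n/k)^{1/2}D_{i,n}/\sqrt{n}$ and thus vanishes in probability.

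It remains to upgrade the main term from $\alpha_n$ to a Brownian bridge, namely to show that one can construct $B_n$ on a common probability space so that $\int_{0}^{1-k/n}(\alpha_n-B_n)\,d\psi = o_{\mathbf{P}}((n/k)^{1/2}D_{i,n})$; this is the principal obstacle. The naive uniform rate $\sup|\alpha_n-B_n|=O_{\mathbf{P}}(\log n/\sqrt{n})$ is inadequate near the endpoint $1-k/n$, where the measure $d\psi$ inherits the regular-variation blow-up of $\psi$. I would therefore appeal to the weighted Hungarian approximation of Cs\"org\H{o}, Cs\"org\H{o}, Horv\'ath and Mason, which provides $\sup_{v\leq 1-k/n}|\alpha_n(v)-B_n(v)|/(1-v)^{1/2-\nu}=O_{\mathbf{P}}(n^{-\nu})$ for a suitably small $\nu>0$, and pair it with the regular-variation estimate $\int_{0}^{1-k/n}(1-v)^{1/2-\nu}\,d\psi(v)=O((k/n)^{1/2-\nu-\gamma_i})$. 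Together with the growth condition (\ref{con-0e}) and the tail restriction $\gamma_i\in(1/2,1)$ imposed in the paper, this yields the required negligibility and establishes (\ref{equa-10cc1}).
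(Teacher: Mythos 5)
Your route is the paper's route in different clothing. Unwinding the Fubini step, your main term $\int_{0}^{1-k/n}(v-G_n(v))\,d\psi(v)$ is the same empirical-process integral the paper obtains, and your boundary correction $\int_{1-k/n}^{U_{n-k:n}}\bigl(1-k/n-G_n(v)\bigr)\,d\psi(v)$ is precisely the Vervaat process $V_n(1-k/n)$ that the paper invokes by citation; your pointwise domination $|1-k/n-G_n(v)|\le |\alpha_n(1-k/n)|/\sqrt{n}$ on the interval, multiplied by the total variation of $\psi$ there, is exactly the Vervaat inequality $\sqrt{n}\,V_n(t)\le |e_n(t)|\,|H_i\circ Q_n(t)-H_i\circ Q(t)|$ rederived from scratch. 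The final replacement of $\alpha_n$ by $B_n$ via the CCHM weighted strong approximation and the $(k/n)^{1/2-\nu-\gamma_i}$ bookkeeping against $D_{i,n}$ also matches the paper, which arrives at the same $O(k^{-\nu})$ conclusion. So I would class this as essentially the same proof, presented self-containedly rather than by appeal to the Vervaat machinery.

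Two small points worth correcting. First, your claim $|\psi(U_{n-k:n})-\psi(1-k/n)|=O_{\mathbf{P}}(\psi(1-k/n)/\sqrt{k})$ overclaims: first-order regular variation plus the intermediate-order CLT give only local uniform convergence of the ratio $\psi(1-\epsilon s)/\psi(1-\epsilon)$, hence $\psi(U_{n-k:n})/\psi(1-k/n)=1+o_{\mathbf{P}}(1)$ via Potter's inequality (this is the paper's Lemma \ref{lemma4.2}), not a $1/\sqrt{k}$ rate; fortunately $o_{\mathbf{P}}(1)$ is all you use, since the prefactor $|\alpha_n(1-k/n)|/((n/k)^{1/2}D_{i,n})=O_{\mathbf{P}}(1/\psi(1-k/n))$ already absorbs the normalization. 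Second, the CCHM bound holds on $[1/n,1-1/n]$, so you should split off $\int_0^{1/n}$ and dispose of it separately (as in the paper's (\ref{stat-1a})); it is an easy $o_{\mathbf{P}}(1)$ since $\psi$ is bounded near $0$ and $(n/k)^{1/2}D_{i,n}\to\infty$, but the step should be stated.
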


The following theorem allows for a general class of functions $\Psi_{i}$ but imposes a requirement that $H_{i}\circ Q$ is continuously differentiable.

\begin{theorem}\label{theor-1a}
Let $H_{i}\circ Q$ be continuously differentiable on $[0,1)$. Furthermore, let $\sqrt{n}\,(\Psi_{i}(1/n)-\Psi_{i}(0))=O(1)$ when $n\rightarrow\infty$, and let $s\rightarrow\int_{s}^{1}(1-t)^{-\gamma_{i}}d\Psi
_{i}(t)$ be regularly varying at $1$ with an index $\kappa\in(0,1/2)$.  Then there
are Brownian bridges $B_{n}$ (on a possibly different probability space) such that statement (\ref{equa-10cc1}) holds with
\[
\ell_{i,n}(1;B_{n})={\frac{-1}{(n/k)^{1/2}D_{i,n}}}\int_{0}^{1-k/n}(H_{i}\circ
Q)'(s)B_{n}(s)d\Psi_{i}(s).
\]
\end{theorem}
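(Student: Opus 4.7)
The approach is to rewrite $\widehat{L}_{i,n}(1)-L_{i,n}(1)$ as a Stieltjes integral involving the uniform empirical quantile process, linearize the integrand using the $C^{1}$ hypothesis on $H_i\circ Q$, and then invoke a Hungarian-type strong approximation of that process. Observe first that $\widehat{L}_{i,n}(1)=\int_{0}^{1-k/n}H_i\circ Q_n(s)\,d\Psi_i(s)$, where $Q_n$ is the empirical quantile function of $X_{1},\ldots,X_{n}$; setting $U_{j}:=F(X_{j})$, the $U_{j}$'s are i.i.d.\ uniform on $[0,1]$ and $H_i\circ Q_n(s)=(H_i\circ Q)(V_n(s))$, where $V_n$ is the empirical quantile function of $U_{1},\ldots,U_{n}$. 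Consequently
\[
\widehat{L}_{i,n}(1)-L_{i,n}(1)
=\int_{0}^{1-k/n}\bigl\{(H_i\circ Q)(V_n(s))-(H_i\circ Q)(s)\bigr\}\,d\Psi_i(s).
\]

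Since $H_i\circ Q$ is $C^{1}$ on $[0,1)$, I would Taylor-expand the integrand as
\[
(H_i\circ Q)(V_n(s))-(H_i\circ Q)(s)=(H_i\circ Q)'(s)\bigl(V_n(s)-s\bigr)+\rho_n(s),
\]
where $\rho_n(s)=\int_{s}^{V_n(s)}\bigl[(H_i\circ Q)'(u)-(H_i\circ Q)'(s)\bigr]\,du$. Then, after possibly enlarging the probability space, I would apply the Cs\"{o}rg\H{o}--Cs\"{o}rg\H{o}--Horv\'{a}th--Mason (1986) weighted approximation: for any fixed $\nu\in(0,1/4)$ there exist Brownian bridges $B_n$ such that
\[
\sup_{1/n\le s\le 1-1/n}\frac{\bigl|\sqrt{n}\bigl(V_n(s)-s\bigr)+B_n(s)\bigr|}{\bigl(s(1-s)\bigr)^{1/2-\nu}}=O_{\mathbf{P}}(n^{-\nu}).
\]
Multiplying the Taylor expansion by $\sqrt{n}$, substituting, integrating against $d\Psi_i$, and normalizing by $(n/k)^{1/2}D_{i,n}$ produces the claimed leading term together with three residuals: the integral of the Hungarian remainder against $(H_i\circ Q)'\,d\Psi_i$; the integral of $\sqrt{n}\,\rho_n$ against $d\Psi_i$; and a boundary piece on $[0,1/n)$ on which the bound above is not asserted.

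The main obstacle is proving that each of the three residuals is $o_{\mathbf{P}}(1)$ after division by $(n/k)^{1/2}D_{i,n}$, because near $s=1-k/n$ the derivative $(H_i\circ Q)'(s)$ is regularly varying at $1$ with index $-\gamma_i-1$ (by (\ref{tag-2u})) and hence diverges, while the weight $(s(1-s))^{1/2-\nu}$ in the Hungarian bound is also inflated. The hypothesis that $s\mapsto\int_{s}^{1}(1-t)^{-\gamma_i}d\Psi_i(t)$ is regularly varying at $1$ with index $\kappa\in(0,1/2)$ is designed precisely for this difficulty: combined with Potter-type bounds on $H_i\circ Q$, it allows the residual integrals to be dominated by $D_{i,n}$ times a small negative power of $k/n$, from which (\ref{con-0e}) yields the desired $o_{\mathbf{P}}(1)$ rate once $\nu$ is chosen small. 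The contribution near $s=0$ is absorbed using boundedness of $H_i\circ Q$ on $[0,1/n]$ together with $\sqrt{n}(\Psi_i(1/n)-\Psi_i(0))=O(1)$. The delicate step is the Taylor remainder $\rho_n$, since $(H_i\circ Q)'$ may oscillate appreciably between $s$ and $V_n(s)$ when $s$ is close to $1-k/n$; I expect to close this estimate via a uniform modulus-of-continuity bound from the Karamata representation of $(H_i\circ Q)'$, combined with the quantile rate $|V_n(s)-s|=O_{\mathbf{P}}(n^{-1/2}(s(1-s))^{1/2-\nu})$ delivered by the Hungarian approximation.
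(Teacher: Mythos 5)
Your plan follows essentially the same route as the paper's own proof: rewrite $\widehat L_{i,n}(1)-L_{i,n}(1)$ via the uniform quantile function, linearize $H_i\circ Q$ around $s$, invoke the Cs\"{o}rg\H{o}--Cs\"{o}rg\H{o}--Horv\'{a}th--Mason weighted quantile approximation to replace the quantile process by Brownian bridges, and dominate the residuals near $1-k/n$ using the regular-variation hypothesis on $s\mapsto\int_s^1(1-t)^{-\gamma_i}d\Psi_i(t)$ together with condition (\ref{con-0e}), while the $[0,1/n]$ boundary is absorbed via $\sqrt n(\Psi_i(1/n)-\Psi_i(0))=O(1)$. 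The one place you diverge is in how the Taylor remainder is packaged: you write $\rho_n(s)=\int_s^{V_n(s)}[(H_i\circ Q)'(u)-(H_i\circ Q)'(s)]\,du$ and intend to control it by a modulus-of-continuity bound, whereas the paper uses the equivalent integral mean-value form $\mathbf E_\tau[(H_i\circ Q)'(s+\tau\vartheta_n(s))]\vartheta_n(s)$, cites Lemma~3 of Necir and Meraghni (2009) for the uniform ratio control (\ref{stat-1d6}), and reduces the dominance estimate (\ref{stat-1d8}) to the Karamata argument of Lemma~\ref{lemma4.1}. One caution: your parenthetical assertion that ``$(H_i\circ Q)'$ is regularly varying at $1$ with index $-\gamma_i-1$ by (\ref{tag-2u})'' is not implied by (\ref{tag-2u}) alone, since regular variation of a function does not transfer to its derivative absent extra structure such as eventual monotonicity (the monotone density theorem). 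The paper's proof only invokes the weaker fact that $(1-s)(H_i\circ Q)'(s)/H_i\circ Q(s)$ stays bounded as $s\uparrow1$ together with the cited lemma, so you should anchor your modulus-of-continuity step to that rather than to regular variation of the derivative.
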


The previous two theorems deal with low and moderate quantiles. The next theorem and its corollary deal with high quantiles.

\begin{theorem}\label{theor-1b}
Let the following three limits exist and be finite:
\begin{equation}
b_{i}=\lim_{n\rightarrow\infty}\sqrt{k}\,A_{i}(n/k),
\label{con-0a}
\end{equation}
\begin{equation}
c_{i}=\lim_{n\rightarrow\infty}{\frac{\int_{0}^{1}s^{-\gamma_{i}}\log
(s)d\Psi_{i}(1-ks/n)}{\int_{0}^{1}s^{-\gamma_{i}}d\Psi_{i}(1-ks/n)}},
\label{con-0b}
\end{equation}
\begin{equation}
d_{i}=\lim_{n\rightarrow\infty}\frac{\int_{0}^{1}s^{-\gamma_{i}}(s^{-\omega_{i}%
}-1)d\Psi_{i}\left(  1-ks/n\right)  }{\int_{0}^{1}s^{-\gamma_{i}}d\Psi
_{i}\left(  1-ks/n\right)  }.
\label{con-0c}
\end{equation}
Furthermore, assume that, for some $\delta>0$ and when $n\rightarrow\infty$,
\begin{equation}
{\frac{\int_{0}^{1}s^{-\gamma_{i}-\omega_{i}-\delta}d\Psi_{i}(1-ks/n)}{\int
_{0}^{1}s^{-\gamma_{i}}d\Psi_{i}(1-ks/n)}}=O(1).
\label{equa-10c}
\end{equation}
Then there are Brownian bridges $B_{n}$ (on a possibly different probability space) such that
\begin{equation}
\frac{\sqrt{n}(\widehat{L}_{i,n}(2)-L_{i,n}(2))}{(n/k)^{1/2}D_{i,n}}
={\frac{-b_{i}d_{i}}{\omega_{i}}}+\ell_{i,n}(2;B_{n})+o_{\mathbf{P}}(1),
\label{equa-10cc2}
\end{equation}
where
\[
\ell_{i,n}(2;B_{n})=-\gamma_{i}(1+c_{i})\sqrt{{\frac{n}{k}}}\,B_{n}(1-k/n)
+c_{i}\gamma_{i}\sqrt{{\frac{n}{k}}}\,\int_{1-k/n}^{1}\frac{B_{n}(s)}{1-s}ds.
\]
\end{theorem}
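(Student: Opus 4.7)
The idea is to linearise $\widehat{L}_{i,n}(2)-L_{i,n}(2)$ in three elementary ingredients---the intermediate order statistic deviation $H_i(X_{n-k:n})/H_i\circ Q(1-k/n)-1$, the Hill deviation $\widehat{\gamma}_i-\gamma_i$, and the deterministic bias stemming from the second-order condition (\ref{tag-2A})---and then to replace the two random pieces by explicit functionals of a single Brownian bridge $B_n$ (cf.\ Note \ref{probab-space}) through standard strong approximation results.

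To begin, I would write
\[
\widehat{L}_{i,n}(2)-L_{i,n}(2)=\int_{1-k/n}^{1}\bigl[\widehat{H_{i}\circ Q}(s)-H_{i}\circ Q(s)\bigr]\,d\Psi_{i}(s),
\]
change variable to $\tau(s)=(1-s)n/k\in(0,1)$ so that $(k/n)^{\gamma_i}(1-s)^{-\gamma_i}=\tau(s)^{-\gamma_i}$, and factor the natural scale $D_{i,n}$ out of the expression. Applying (\ref{tag-2A}) with $\epsilon=k/n$ gives, uniformly in $s\in(1-k/n,1)$,
\[
H_{i}\circ Q(s)=H_{i}\circ Q(1-k/n)\,\tau(s)^{-\gamma_i}\Bigl[1+\frac{A_i(n/k)}{\omega_i}\bigl(\tau(s)^{-\omega_i}-1\bigr)+o(A_i(n/k))\Bigr],
\]
the uniformity being ensured by (\ref{equa-10c}) together with a Potter-type bound that lets the remainder pass the $d\Psi_i$-integration. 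I would similarly expand $\widehat{H_{i}\circ Q}(s)=H_i(X_{n-k:n})\tau(s)^{-\widehat{\gamma}_i}$ by Taylor-expanding $\tau(s)^{-(\widehat{\gamma}_i-\gamma_i)}$ around $\gamma_i$. Subtracting the two expansions, integrating against $d\Psi_i$, and identifying the limits $c_i$ and $d_i$ from (\ref{con-0b})--(\ref{con-0c}) after the substitution $t=\tau(s)$ produce the linearisation
\[
\frac{\sqrt{k}\bigl(\widehat{L}_{i,n}(2)-L_{i,n}(2)\bigr)}{D_{i,n}}=\sqrt{k}\Bigl[\frac{H_i(X_{n-k:n})}{H_{i}\circ Q(1-k/n)}-1\Bigr]-c_i\sqrt{k}(\widehat{\gamma}_i-\gamma_i)-\frac{b_i d_i}{\omega_i}+o_{\mathbf{P}}(1).
\]

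Next, I would invoke two strong Brownian-bridge approximations, realised on a common probability space with the same bridge $B_n$. For the intermediate order statistic, the classical strong approximation of the uniform quantile process together with (\ref{tag-2u}) yields
\[
\sqrt{k}\Bigl[\frac{H_i(X_{n-k:n})}{H_{i}\circ Q(1-k/n)}-1\Bigr]=-\gamma_i\sqrt{n/k}\,B_n(1-k/n)+o_{\mathbf{P}}(1).
\]
For the Hill estimator, a classical representation under the second-order condition (as used by Necir and Meraghni, 2009 and 2010) can be chosen so that, with the same $B_n$,
\[
\sqrt{k}(\widehat{\gamma}_i-\gamma_i)=\gamma_i\sqrt{n/k}\,B_n(1-k/n)-\gamma_i\sqrt{n/k}\int_{1-k/n}^{1}\frac{B_n(s)}{1-s}\,ds+o_{\mathbf{P}}(1).
\]

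Substituting these representations into the linearisation, the two $\sqrt{n/k}\,B_n(1-k/n)$ contributions combine into coefficient $-\gamma_i(1+c_i)$ and the integral picks up coefficient $c_i\gamma_i$, producing exactly $\ell_{i,n}(2;B_n)$; the only deterministic term left over is $-b_i d_i/\omega_i$, yielding (\ref{equa-10cc2}). The main obstacle is neither of the individual Brownian-bridge approximations---both are classical---but rather to realise them on a common probability space with a single $B_n$ and simultaneously to keep the second-order remainder from (\ref{tag-2A}) uniformly $o_{\mathbf{P}}(1)$ after scaling by $\sqrt{k}$. The former is arranged by reading off both approximations from one KMT-type coupling of the uniform empirical process (cf.\ Note \ref{probab-space}), and the latter is precisely where condition (\ref{equa-10c}) does its work.
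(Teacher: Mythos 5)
Your proposal is correct and follows essentially the same route as the paper: the paper also normalizes by $D_{i,n}$, reduces the task to a linearization in the intermediate order statistic, the Hill estimator deviation, and the deterministic second-order bias, and then plugs in the Cs\"{o}rg\H{o}--Cs\"{o}rg\H{o}--Horv\'{a}th--Mason strong approximation for the uniform quantile process and the Cs\"{o}rg\H{o}--Mason representation for the Hill estimator, realized with the same bridge $B_n$. The only cosmetic difference is that the paper obtains your one-line linearization via an explicit four-term telescoping (their equation (\ref{equa-1})), the first telescoping term absorbing the cross-product between the order-statistic deviation and the Hill ratio and being shown $o_{\mathbf P}(1)$ via regular variation, while you bundle those steps together; and the paper's control of the uniform second-order remainder is done through de Haan--Ferreira Theorem 2.3.9 (their inequality (\ref{equa-11z})), which is the rigorous version of the Potter-type bound you invoke.
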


The following corollary is a special case of Theorem \ref{theor-1b} when $\Psi_{i}(s)=s$.

\begin{corollary}\label{utheor-1b}
Let $\Psi_{i}(s)=s$ for all $s\in [0,1]$, and let the limit $b_i$ defined in (\ref{con-0a}) exist and be finite. Furthermore, let $\gamma_{i}+\omega_{i}<1$. Then there are Brownian bridges $B_{n}$ (on a possibly different probability space) such that
\begin{equation}
\frac{\sqrt{n}(\widehat{L}_{i,n}(2)-L_{i,n}(2))}{(n/k)^{1/2}D_{i,n}}%
={\frac{-b_{i}d_{i}}{\omega_{i}}}+\ell_{i,n}(2;B_{n})+o_{\mathbf{P}}(1),
\label{uequa-10cc2}
\end{equation}
where $c_{i}$ in the definition of $\ell_{i,n}(2;B_{n})$ and the constant $d_{i}$ are given by the formulas
\[
c_{i}={-1 \over 1-\gamma_{i}}
\quad \textrm{and} \quad
d_{i}={\omega_{i} \over 1-\gamma_{i}-\omega_{i}}.
\]
\end{corollary}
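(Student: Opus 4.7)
The strategy is simply to reduce the Corollary to a direct application of Theorem \ref{theor-1b} by verifying, one by one, that the three integral conditions (\ref{con-0b}), (\ref{con-0c}) and (\ref{equa-10c}) hold when $\Psi_i(s)=s$, and then computing the limiting constants $c_i$ and $d_i$ in closed form. The existence and finiteness of $b_i$ is hypothesized, so nothing is needed there, and the linear functional $\ell_{i,n}(2;B_n)$ then takes exactly the form given in (\ref{equa-10cc2}) with the asserted value of $c_i$.

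The main computational step is the substitution $d\Psi_i(1-ks/n)=-(k/n)\,ds$, which reduces all four integrals to elementary beta-type integrals on $[0,1]$. First, I would compute the common denominator
\[
\int_{0}^{1} s^{-\gamma_i}\,d\Psi_i(1-ks/n)=-\frac{k}{n}\int_0^1 s^{-\gamma_i}\,ds=-\frac{k}{n}\cdot\frac{1}{1-\gamma_i},
\]
which is finite because $\gamma_i<1$. For $c_i$, the numerator integral is $-(k/n)\int_0^1 s^{-\gamma_i}\log(s)\,ds=(k/n)/(1-\gamma_i)^2$ (using the standard identity $\int_0^1 s^a\log(s)\,ds=-1/(a+1)^2$), and dividing yields $c_i=-1/(1-\gamma_i)$. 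For $d_i$, the numerator $\int_0^1 s^{-\gamma_i}(s^{-\omega_i}-1)\,d\Psi_i(1-ks/n)$ becomes, after the substitution and using $\gamma_i+\omega_i<1$,
\[
-\frac{k}{n}\left(\frac{1}{1-\gamma_i-\omega_i}-\frac{1}{1-\gamma_i}\right)
=-\frac{k}{n}\cdot\frac{\omega_i}{(1-\gamma_i-\omega_i)(1-\gamma_i)},
\]
and division by the denominator yields $d_i=\omega_i/(1-\gamma_i-\omega_i)$, matching the claim.

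The remaining technical step is condition (\ref{equa-10c}). Since $\gamma_i+\omega_i<1$ strictly, one can choose $\delta>0$ so small that $\gamma_i+\omega_i+\delta<1$; then the numerator of (\ref{equa-10c}) equals $-(k/n)/(1-\gamma_i-\omega_i-\delta)$, a finite constant times $k/n$, and the ratio equals the constant $(1-\gamma_i)/(1-\gamma_i-\omega_i-\delta)$, which is trivially $O(1)$. With all hypotheses of Theorem \ref{theor-1b} in force, the conclusion (\ref{uequa-10cc2}) is just (\ref{equa-10cc2}) with the explicit values of $c_i$ and $d_i$ plugged in.

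The step I expect to require the most care is handling the substitution $d\Psi_i(1-ks/n)$ consistently in both numerator and denominator, since the factor $-k/n$ appears with a sign that cancels in every ratio; once that bookkeeping is settled, the rest is mechanical. No new probabilistic content is needed beyond Theorem \ref{theor-1b}.
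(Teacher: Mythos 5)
Your proof is correct and is exactly the argument the paper leaves implicit: the corollary is stated without a written proof, and the one natural route is to verify conditions (\ref{con-0b})--(\ref{equa-10c}) for $\Psi_i(s)=s$ and apply Theorem~\ref{theor-1b}. Your substitution $d\Psi_i(1-ks/n)=-(k/n)\,ds$, the cancellation of the $-k/n$ factor in every ratio, the beta-integral evaluations giving $c_i=-1/(1-\gamma_i)$ and $d_i=\omega_i/(1-\gamma_i-\omega_i)$, and the choice of $\delta<1-\gamma_i-\omega_i$ to make condition (\ref{equa-10c}) an explicit constant (hence $O(1)$) are all as intended; the same computations appear in Section~\ref{sect-3b} for the PHT case with $\Psi(s)=-(1-s)^{1/\rho}$, and setting $\rho=1$ there recovers your values. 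One could add the remark that the stated hypothesis $\gamma_i+\omega_i<1$ is in fact automatic from $\gamma_i\in(1/2,1)$ and the standing assumption $\omega_i\le 0$, so it is only being recorded for emphasis, but that does not affect your reasoning.
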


We have noted above that the Brownian bridges $B_{n}$ and the probability space are same in all our results. This allows us to pair, for example, Theorem \ref{theor-1ab} with Corollary \ref{theor-1b}, and Theorem \ref{theor-1a} with Theorem \ref{theor-1b}. Two results of this pairing are given next.

\begin{corollary}\label{cor-1}
Assume that one of the following two requirements is satisfied:
\begin{enumerate}
  \item $\Psi_i(x)=s$ for all $s\in [0,1]$, and let the conditions of Theorem \ref{theor-1ab} and Corollary \ref{theor-1b} be satisfied.
  \item The conditions of Theorems \ref{theor-1a} and \ref{theor-1b} are satisfied.
\end{enumerate}
Then there are Brownian bridges $B_{n}$ (on a possibly different probability space) such that
\begin{equation}
\frac{\sqrt{n}(\widehat{L}_{i,n}-L_{i})}{(n/k)^{1/2}D_{i,n}}={\frac
{-b_{i}d_{i}}{\omega_{i}}}+\ell_{i,n}(B_{n})+o_{\mathbf{P}}(1), \label{equa-10fin}
\end{equation}
where $\ell_{i,n}=\ell_{i,n}(1;\bullet )+\ell_{i,n}(2;\bullet)$.
\end{corollary}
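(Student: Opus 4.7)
The plan is to split $\widehat{L}_{i,n}-L_i[F]$ into its two natural pieces and approximate each piece separately using the theorems already established. Recall that $L_i[F]=L_{i,n}(1)+L_{i,n}(2)$ and, by the construction in (\ref{decomp-emp}), $\widehat{L}_{i,n}=\widehat{L}_{i,n}(1)+\widehat{L}_{i,n}(2)$, so the algebraic identity
\[
\widehat{L}_{i,n}-L_i[F]=\big(\widehat{L}_{i,n}(1)-L_{i,n}(1)\big)+\big(\widehat{L}_{i,n}(2)-L_{i,n}(2)\big)
\]
holds pointwise on the original probability space. Multiplying through by $\sqrt{n}/((n/k)^{1/2}D_{i,n})$ produces a sum of two normalized deviations whose summands match precisely the left-hand sides of (\ref{equa-10cc1}) and (\ref{equa-10cc2}).

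Under assumption (1), I would apply Theorem \ref{theor-1ab} to the first summand and Corollary \ref{utheor-1b} to the second; under assumption (2), I would apply Theorem \ref{theor-1a} and Theorem \ref{theor-1b} respectively. In either case, the first summand is approximated by $\ell_{i,n}(1;B_n)+o_{\mathbf{P}}(1)$ (with no bias contribution, since the approximation in (\ref{equa-10cc1}) is centered), while the second summand is approximated by $-b_id_i/\omega_i+\ell_{i,n}(2;B_n)+o_{\mathbf{P}}(1)$. Adding the two expressions and using the fact that the sum of two $o_{\mathbf{P}}(1)$ remainders is itself $o_{\mathbf{P}}(1)$ yields exactly the right-hand side of (\ref{equa-10fin}), with the linear functional $\ell_{i,n}=\ell_{i,n}(1;\bullet)+\ell_{i,n}(2;\bullet)$ and the bias $-b_id_i/\omega_i$ inherited unchanged from the high-quantile piece.

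The one genuinely delicate point, which I expect to be the main (though essentially conceptual) obstacle, is that each of the invoked theorems constructs \emph{its own} Brownian bridges on a possibly different probability space. For the algebraic sum above to constitute a single weak-approximation statement, the sequence $B_n$ appearing in the approximation of $\widehat{L}_{i,n}(1)-L_{i,n}(1)$ must coincide with the one appearing in the approximation of $\widehat{L}_{i,n}(2)-L_{i,n}(2)$; otherwise one would be left with two formally unrelated bridges and could not collapse the remainders into a single $o_{\mathbf{P}}(1)$ term. This is precisely what Note \ref{probab-space} guarantees: the proofs of Theorems \ref{theor-1ab}, \ref{theor-1a}, \ref{theor-1b} and Corollary \ref{utheor-1b} all proceed through a Hungarian-style coupling of the uniform empirical process with one and the same sequence of Brownian bridges, and that common sequence may be used in all four statements simultaneously. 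Once this common probability space is fixed, the proof of (\ref{equa-10fin}) reduces to the one-line addition indicated above.
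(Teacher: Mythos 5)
Your proposal is correct and matches the paper's (implicit) argument exactly: the paper offers no separate proof of this corollary, but justifies it in the sentence preceding it by invoking Note~\ref{probab-space} — that all of Theorems~\ref{theor-1ab}, \ref{theor-1a}, \ref{theor-1b} and Corollary~\ref{utheor-1b} are proved via the Cs\"{o}rg\H{o}--Cs\"{o}rg\H{o}--Horv\'{a}th--Mason coupling on one common probability space with one common sequence of Brownian bridges — which is precisely the point you identify as the delicate step, and once that is granted the result is the termwise addition of (\ref{equa-10cc1}) and (\ref{equa-10cc2}) (or (\ref{uequa-10cc2})) that you carry out.
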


Corollary \ref{cor-1} together with Lemma \ref{lemma-funda} imply the following theorem, which we view as the main result of the present paper.

\begin{theorem}\label{th-main-2}
Assume that when $i=1$, and also when $i=2$, one of the following two requirements is satisfied:
\begin{enumerate}
  \item $\Psi_i(x)=s$ for all $s\in [0,1]$, and let the conditions of Theorem \ref{theor-1ab} and Corollary \ref{theor-1b} be satisfied.
  \item The conditions of Theorems \ref{theor-1a} and \ref{theor-1b} are satisfied.
\end{enumerate}
Furthermore, let there exist a constant $\delta\in [0,1]$ such that, when $n\rightarrow\infty$,
\begin{equation}
{\frac{D_{1,n}}{D_{1,n}+D_{2,n}}}\rightarrow\delta.
\label{approxim-5a}
\end{equation}
Then there are Brownian bridges $B_{n}$ (on a possibly different probability space) such that
\begin{equation}
\frac{\sqrt{n}\big(\widehat{\Pi}_{n}-\Pi[F]\big)}{(n/k)^{1/2}(D_{1,n}+D_{2,n})}
=\lambda+\ell_n(B_{n})+o_{\mathbf{P}}(1)
\label{approxim-5b}
\end{equation}
with the bias term
\[
\lambda=\delta\mathcal{H}^{(1)}_{x}(L_{1}[F],L_{2}[F]){\frac{-b_{1}d_{1}}{\omega_{1}}}
+(1-\delta)\mathcal{H}^{(1)}_{y}(L_{1}[F],L_{2}[F]){\frac{-b_{2}d_{2}}{\omega_{2}}}
\]
and the leading Gaussian term
\[
\ell_n=\delta\mathcal{H}^{(1)}_{x}(L_{1}[F],L_{2}[F])\ell_{1,n}
+(1-\delta)\mathcal{H}^{(1)}_{y}(L_{1}[F],L_{2}[F])\ell_{2,n},
\]
where $\ell_{i,n}=\ell_{i,n}(1;\bullet )+\ell_{i,n}(2;\bullet)$.
\end{theorem}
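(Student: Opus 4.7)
The plan is a direct two-stage assembly: the ingredient machinery has already been developed, so the theorem falls out by combining Corollary \ref{cor-1} (applied separately for $i=1$ and $i=2$) with the coupling device supplied by Lemma \ref{lemma-funda}. Everything reduces to verifying that the normalizers match and that the same Brownian bridge sequence may be used throughout.

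First I would set $b_{i,n}:=\sqrt{n}/\big((n/k)^{1/2}D_{i,n}\big)=\sqrt{k}/D_{i,n}$ and invoke Corollary \ref{cor-1} for $i=1$, and then separately for $i=2$. Under the hypothesis of the theorem, one of the two alternative assumption sets of Corollary \ref{cor-1} holds for each $i$, so for each $i$ there exist Brownian bridges, constants $\lambda_i=-b_id_i/\omega_i$, and linear functionals $\ell_{i,n}=\ell_{i,n}(1;\bullet)+\ell_{i,n}(2;\bullet)$ yielding an expansion of the form (\ref{approxim-1}). The only subtlety is that Corollary \ref{cor-1} invokes up to four underlying theorems (Theorem \ref{theor-1ab} or \ref{theor-1a} for the non-tail piece, and Theorem \ref{theor-1b} or Corollary \ref{utheor-1b} for the tail piece), each producing its own Brownian bridge; by Note \ref{probab-space} these can all be arranged on a single probability space and identified with a common sequence $B_n$. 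This is the only non-automatic point of the argument, and it is precisely why Note \ref{probab-space} was included.

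Next I would verify the two normalizer identities that feed Lemma \ref{lemma-funda}. A short calculation with $b_{i,n}=\sqrt{k}/D_{i,n}$ yields
\[
\frac{b_{2,n}}{b_{1,n}+b_{2,n}}=\frac{D_{1,n}}{D_{1,n}+D_{2,n}}\longrightarrow \delta,
\]
so assumption (\ref{approxim-2}) of Lemma \ref{lemma-funda} is precisely the hypothesis (\ref{approxim-5a}) of the theorem, with the same $\delta$. The parallel identity
\[
\frac{b_{1,n}b_{2,n}}{b_{1,n}+b_{2,n}}=\frac{\sqrt{k}}{D_{1,n}+D_{2,n}}=\frac{\sqrt{n}}{(n/k)^{1/2}(D_{1,n}+D_{2,n})}
\]
shows that the global normalizer produced by Lemma \ref{lemma-funda} is exactly the one appearing on the left-hand side of the target statement (\ref{approxim-5b}).

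Finally, I would apply Lemma \ref{lemma-funda} to conclude. Substituting $\lambda_i=-b_id_i/\omega_i$ and the identified $\ell_{i,n}$ into the bias term and the linear functional of (\ref{approxim-1a}) reproduces verbatim the $\lambda$ and $\ell_n$ advertised in the theorem. There is no genuine obstacle in this proof: all of the hard analysis has been absorbed into Theorems \ref{theor-1ab}--\ref{theor-1b}, Corollary \ref{utheor-1b}, and Lemma \ref{lemma-funda}, and what remains is bookkeeping of normalizing constants together with the synchronization of Brownian bridges guaranteed by Note \ref{probab-space}.
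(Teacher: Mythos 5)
Your proposal is correct and follows precisely the route the paper itself indicates: setting $b_{i,n}=\sqrt{k}/D_{i,n}$, applying Corollary \ref{cor-1} for each $i$ to obtain representation (\ref{approxim-1}) with $\lambda_i=-b_id_i/\omega_i$ and $\ell_{i,n}=\ell_{i,n}(1;\bullet)+\ell_{i,n}(2;\bullet)$, checking that $b_{2,n}/(b_{1,n}+b_{2,n})=D_{1,n}/(D_{1,n}+D_{2,n})$ and $b_{1,n}b_{2,n}/(b_{1,n}+b_{2,n})=\sqrt{k}/(D_{1,n}+D_{2,n})$, and feeding these into Lemma \ref{lemma-funda}. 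You also rightly flag the synchronization of Brownian bridges via Note \ref{probab-space} as the one non-automatic point, which is exactly why the paper includes that note.
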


We can use Theorem \ref{th-main-2} to derive asymptotic distributions of various special cases of $\widehat{\Pi}_{n}$ and thus obtain statistical inferential results for the corresponding $\Pi[F]$. Naturally, one of the most challenging tasks that inevitably arises is obtaining explicit expressions for $\ell_n(B_{n})$. To illustrate how this can be done, in the next section we present detailed examples tackling the PHT and CTE/TVaR risk measures.

\section{\bf Brownian bridge approximations: illustrative examples}
\label{sect-3b}

Our illustrative examples in Section \ref{sect-1} were based on three risk measures: the proportional hazards transform $\textrm{PHT}[\rho,F]$, the conditional tail expectation $\textrm{CTE}[t,F]$, and the net premium $\mathbf{E}[X]$, which can be viewed, for example, as the proportional hazards transform $\textrm{PHT}[\rho,F]$ with the parameter $\rho =1$. Hence, in order to develop a statistical inferential theory for $\Pi[F]=\mathcal{H}(L_{1}[F],L_{2}[F])$ with $L_{1}[F]$ and $L_{2}[F]$ being two of the aforementioned three risk measures, we need to establish asymptotic representations (\ref{equa-10fin}) for the proportional hazards transform (PHT) and the conditional tail expectation (CTE). This we do next. Since we work individually with each of the two risk measures, we shall drop the subindex $i$ from the rest of this section.

\subsection*{PHT}

In the case of the proportional hazards transform $\textrm{PHT}[\rho,F]$, we have   $\Psi(s)=-(1-s)^{1/\rho}$ and $H(x)=x$ (Example \ref{ex-1.1}). Using decomposition (\ref{decomp-emp}), the estimator $\widehat{PHT}_n$ can be expressed as
\[
\widehat{PHT}_n= \sum_{j=1}^{n-k}
\bigg\{ \bigg(  1-\frac{j-1}{n}\bigg)  ^{1/\rho}-\bigg(  1-\frac{j}{n}\bigg)  ^{1/\rho}\bigg \}X_{j:n}
+\bigg( \frac{k}{n}\bigg)^{1/\rho}\frac{X_{n-k:n}}{1-\rho\widehat{\gamma}}
\]
with
\begin{equation}
\widehat{\gamma}={1\over k}\sum_{j=1}^{k}
\log\bigg ( {X_{n-j+1:n} \over X_{n-k:n}} \bigg ).
\label{hill-1}
\end{equation}
Brownian bridge approximations for $\widehat{PHT}_n$ can be established using Corollary \ref{cor-1} under the conditions of Theorems \ref{theor-1a} and \ref{theor-1b}. For this, we first calculate
\[
D_{n}=\bigg( \frac{k}{n}\bigg)^{1/\rho}\frac{Q(1-k/n)}{1-\rho\gamma}
\]
and easily check that $(n/k)^{1/2}D_{n}\to \infty $ when $n\to \infty $.  Assuming that the quantile function $Q$ is continuously differentiable on $[0,1)$,  with other conditions of Theorem \ref{theor-1a} satisfied automatically, we have that
\begin{equation}
\ell_{n}(1;B_{n})={\frac{-(1/\rho-\gamma)}{(k/n)^{1/\rho -1/2}Q(1-k/n)}}\int
_{0}^{1-k/n}(1-s)^{1/\rho-1}B_{n}(s)dQ(s).
\label{ell-pht-1}
\end{equation}
Assume that $\lim_{n\rightarrow\infty}\sqrt{k}\,A_{1}(n/k)=0$. Limits (\ref{con-0b}) and (\ref{con-0c}) are
\begin{equation}
c={-1 \over 1/\rho-\gamma}
\quad \textrm{and} \quad
d={\omega \over 1/\rho-\gamma-\omega},
\label{lim-cANDd}
\end{equation}
provided that $1/\rho-\gamma-\omega>0$. The ratio on the right hand side of equation (\ref{equa-10c}) is equal to $( 1/\rho -\gamma )/( 1/\rho -\gamma-\omega-\delta) $ under the assumption that $\delta>0$ is so small that $\delta<1/\rho-\gamma-\omega$. (We can always choose such $\delta>0$ because $1/\rho-\gamma-\omega>0$ by assumption.)  We can now conclude from Theorem \ref{theor-1b} that
\begin{equation}
\ell_{n}(2;B_{n})=-\gamma \bigg ( 1- {1 \over 1/\rho-\gamma}\bigg ) \sqrt{{\frac{n}{k}}}\,B_{n}(1-k/n)
-\gamma{ 1 \over 1/\rho-\gamma}\sqrt{{\frac{n}{k}}}\,\int_{1-k/n}^{1}\frac{B_{n}(s)}{1-s}ds.
\label{ell-pht-2}
\end{equation}
Hence, $\ell_{n}(B_{n})$ is the sum of the right-hand sides of equations (\ref{ell-pht-1}) and (\ref{ell-pht-2}). The sum is asymptotically Gaussian with the mean zero and the variance (cf.\, Necir and Meraghni, 2009)
\begin{equation}
\frac{\gamma^{2}(  \gamma^{2}\rho^{2}-2\gamma^{2}\rho^{3}+4\gamma\rho^{2}-2\gamma\rho+\rho^{2}-2\rho+1)  }{(\gamma\rho-1)^{2}}+\frac{2\gamma^{2}(\rho+\gamma\rho-1)  }{\rho+2\gamma\rho-2}.
\label{var-1}
\end{equation}

\subsection*{An interlude}

We note that it is not the variance expression (\ref{var-1}) that is important for us --  unless we consider the proportional hazards transform $\textrm{PHT}[\rho,F]$ as a stand alone risk measure -- but the above derived expression for $\ell_{n}(B_{n})$ as it allows us to combine the expression with the corresponding one of the other risk measure making up the coupled risk measure under consideration. This explains our interest in deriving expressions of $\ell_{n}(B_{n})$ for various risk measures. For example, if the second risk measure is the mean, which is the net premium, then an expression for the corresponding $\ell_{n}(B_{n})$ follows from the formulas of the previous subsection by setting $\rho =1$. If the second risk measure is the CTE/TVaR, then we refer to our next subsection.

\subsection*{CTE/TVaR}

In the case of the conditional tail expectation $\textrm{CTE}[t,F]$, we have   $\Psi(s)=(s-t)_{+} / (1-t)$ and $H(x)=x$ (Example \ref{ex-1.1}). Using decomposition (\ref{decomp-emp}), we express the estimator $\widehat{CTE}_n$ by the formula
\[
\widehat{CTE}_n=\dfrac{1}{1-t}\sum_{j=1}^{n-k} \bigg ( \left(  \frac{j}{n}-t\right)_{+}-\left(  \frac{j-1}{n}-t\right)_{+}  \bigg )  X_{j:n}+\frac{kX_{n-k:n}}{n\left(  1-t\right)  \left(  1-\widehat{\gamma}\right)  }
\]
for any fixed $0<t<1$ and for all sufficiently large $n$, where $\widehat{\gamma}$ is same as in (\ref{hill-1}). Assume that $\lim_{n\rightarrow\infty}\sqrt{k}\,A_{1}(n/k)=0$. Limits (\ref{con-0b}) and (\ref{con-0c}) are
\[
c={-1 \over 1-\gamma}
\quad \textrm{and} \quad
d={\omega \over 1-\gamma-\omega},
\]
provided that $\gamma+\omega<1$. (Same as in (\ref{lim-cANDd}) but with $\rho=1$.) The ratio on the right hand side of equation (\ref{equa-10c}) is equal to $1/( 1 -\gamma-\omega-\delta) $, assuming that $\delta>0$ is so small that $\delta<1-\gamma-\omega$. From Theorems \ref{theor-1a} and \ref{theor-1b} we have Brownian bridges $B_{n}$ such that
\begin{equation}
\ell_{n}(1;B_{n})=\frac{-(1-\gamma )}{(k/n)^{1/2}Q(1-k/n)} \int_{0}^{1-k/n}B_{n}(s)dQ(s)
\label{stat-1}
\end{equation}
and
\begin{equation}
\ell_{n}(2;B_{n})={\gamma^2 \over 1-\gamma }
 \sqrt{{n\over k}}\,B_{n}(1-k/n)
-{\gamma \over 1-\gamma }
\sqrt{{n\over k}}\,\int_{1-k/n}^{1}\frac{B_{n}(s)}{1-s}ds.
\label{stat-2}
\end{equation}
Hence, $\ell_{n}(B_{n})$ is equal to the sum of the right-hand sides of equations (\ref{stat-1}) and (\ref{stat-2}). This sum is asymptotically Gaussian with the mean zero and the variance (cf.\, Necir et al., 2010)
\begin{equation}
{\frac{\gamma^{4}}{(1-\gamma)^{2}(2\gamma-1)}}.
\label{var-2}
\end{equation}
We highlight again that it is not the latter variance but the above derived formula for $\ell_{n}(B_{n})$ that is of our primary interest in the present paper.

\subsection*{Useful formulas}

Calculating the second moments or, equivalently, the variances of the functionals  $\ell_{n}(B_{n})$ in our above examples reduces to calculating the second and mixed moments of the following three random variables:
\begin{align*}
W_{1,n}&={1\over (k/n)^{1/\rho -1/2}Q(1-k/n)}\int_{0}^{1-k/n}(1-s)^{1/\rho-1}B_{n}(s)dQ(s),
\\
W_{2,n}&= {\sqrt{{n\over k}}\,B_{n}(1-k/n)},
\\
W_{3,n}&= {\sqrt{{n\over k}}\,\int_{1-k/n}^{1}\frac{B_{n}(s)}{1-s}ds}.
\end{align*}
We next give formulas that make such calculations straightforward. Namely, when $n\to \infty $, we have the limits:
\begin{align*}
& \mathbf{E}[W_{1,n}^{2}]\rightarrow {\gamma^2 \over (1/\rho-\gamma-1)(1/\rho-\gamma-1/2)},
\\
& \mathbf{E}[W_{2,n}^{2}]\rightarrow 1,
\\
& \mathbf{E}[W_{3,n}^{2}]\rightarrow 2,
\\
& \mathbf{E}[W_{1,n}W_{2,n}]\rightarrow {\gamma \rho \over 1/\rho-\gamma-1},
\\
& \mathbf{E}[W_{1,n}W_{3,n}]\rightarrow {-\gamma  \over 1/\rho-\gamma-1},
\\
& \mathbf{E}[W_{2,n}W_{3,n}]\rightarrow 1.
\end{align*}
Using these limits, we can now easily check, for example, that $\mathbf{E}[\ell_{n}^2(B_{n})]$ is asymptotically (when $n\to \infty $) equal to quantity (\ref{var-1}) in the PHT case and to (\ref{var-2}) in the CTE/TVaR case.

\section{\bf Proofs}
\label{sect-4}

We begin with a proof of Theorem \ref{theor-1a}, which is then followed by a proof of Theorem \ref{theor-1ab}. This sequence of proofs is natural since Theorem \ref{theor-1ab} assumes $\Psi_{i}(s)=s$ and thus allows us to employ an additional technical tool, called the Vervaat process, which enables us to relax the differentiability of $F\circ H_{i}^{-1}$ to only continuity.

\begin{proof}[\textbf{Proof of Theorem \ref{theor-1a}}]

We first show that
\begin{equation}
{\frac{\sqrt{n}}{(n/k)^{1/2}D_{i,n}}}\int_{0}^{1/n}\left(  H_{i}\circ
Q_{n}(s)-H_{i}\circ Q(s)\right)  d\Psi_{i}(s)=o_{\mathbf{P}}(1).
\label{stat-1a}
\end{equation}
Since $Q_{n}(s)=X_{1:n}$ when $s\in (0,1/n]$ and
$X_{1:n}=O_{\mathbf{p}}(1)$, and since the function $H_{i}$ is non-decreasing,
we have that $\sup_{s\in (0,1/n]}H_{i}\circ Q_{n}(s)=O_{\mathbf{p}}(1) $. Consequently,  the quantity $\sqrt{n}\int_{0}^{1/n}H_{i}\circ Q_{n}(s)ds$ is of the order
$O_{\mathbf{p}}( 1/\sqrt{n})  $ and thus, in particular, converges to zero in probability. Furthermore, since the function $H_{i}\circ Q$ is non-decreasing, we have that
\begin{equation}
\sqrt{n}\int_{0}^{1/n}H_{i}\circ Q(s)d\Psi_{i}(s)
\leq \sqrt{n}(\Psi_{i}(1/n) -\Psi_{i}(0) ) H_{i}\circ Q(1/n) .
\label{stat-1ak}
\end{equation}
By assumption, $\sqrt{n}(\Psi_{i}(1/n) -\Psi_{i}(0) )$ is bounded, and so the left-hand side of inequality (\ref{stat-1ak}) is asymptotically bounded. This proves statement (\ref{stat-1a}) because $(n/k)^{1/2}D_{i,n}\rightarrow \infty $ when $n\rightarrow\infty$ by assumption (\ref{con-0e}).

In view of statement (\ref{stat-1a}), the proof of Theorem \ref{theor-1a} reduces to showing that
\begin{equation}
{\frac{\sqrt{n}}{(n/k)^{1/2}D_{i,n}}}\int_{1/n}^{1-k/n}\left(  H_{i}\circ
Q_{n}(s)-H_{i}\circ Q(s)\right)  d\Psi_{i}(s)=\ell_{i,n}(1;B_{n})+o_{\mathbf{P}}(1). \label{stat-1new}
\end{equation}
Before doing so, we introduce additional notation. Let $E_{n}^{-1}$ denote the uniform quantile function based on the uniform on $[0,1]$ random variables $F(X_{1}),\dots,F(X_{n})$ (recall that the cdf $F$ is continuous by assumption).
Hence, $Q_{n}(s)=Q(E_{n}^{-1}(s))$. With the notation
\begin{equation}
\vartheta_{n}(s)=E_{n}^{-1}(s)-s ,
\label{quant-process}
\end{equation}
and using the assumed differentiability of the function $H_{i}\circ Q$, we have that
\begin{align}
& {\frac{\sqrt{n}}{(n/k)^{1/2}D_{i,n}}} \int_{1/n}^{1-k/n}\left(  H_{i}\circ
Q(s+\vartheta_{n}(s))-H_{i}\circ Q(s)\right)  d\Psi_{i}(s)
\notag
\\
&={\frac{\sqrt{n}}{(n/k)^{1/2}D_{i,n}}}\int_{1/n}^{1-k/n}\mathbf{E}_{\tau
}\left[  (H_{i}\circ Q)'(s+\tau\vartheta_{n}(s))\right]  \vartheta
_{n}(s)d\Psi_{i}(s)
\notag
\\
&= {\frac{\sqrt{n}}{(n/k)^{1/2}D_{i,n}}}\int_{1/n}^{1-k/n}\Big (\mathbf{E}_{\tau
}\left[  (H_{i}\circ Q)'(s+\tau\vartheta_{n}(s))\right]
-(H_{i}\circ Q)'(s)\Big )\vartheta_{n}(s)d\Psi_{i}(s)
\notag
\\
&\quad +{\frac{\sqrt{n}}{(n/k)^{1/2}D_{i,n}}}\int_{1/n}^{1-k/n}(H_{i}\circ
Q)'(s)\vartheta_{n}(s)d\Psi_{i}(s)
\label{stat-1b}
\end{align}
where $\tau$ is a uniform on $[0,1]$ random variable, independent of all
other random variables, and $\mathbf{E}_{\tau}$ denotes the expectation with
respect to $\tau$, with all other random variables being fixed.

We shall next prove that
\begin{equation}
{\frac{\sqrt{n}}{(n/k)^{1/2}D_{i,n}}}\int_{1/n}^{1-k/n}\Big (\mathbf{E}_{\tau
}\left[  (H_{i}\circ Q)'(s+\tau\vartheta_{n}(s))\right]
-(H_{i}\circ Q)'(s)\Big )\vartheta_{n}(s)d\Psi_{i}(s)=o_{\mathbf{P}}(1).
\label{stat-1d}
\end{equation}
For this we first fix any $\epsilon\in(0,1)$ and establish statement (\ref{stat-1d}) with the integral $\int_{1/n}^{1-k/n}$ replaced by $\int_{1/n}^{1-\epsilon}$. The function $(H_{i}\circ Q)'$ is uniformly continuous on the interval $[0,1-\epsilon]$ and the process $\vartheta_{n}$ converges to $0$ uniformly on $[0,1]$. Moreover, $\sup_{0<s<1}|\sqrt{n}\,\vartheta_{n}(s)|=O_{\mathbf{P}}(1)$. Since $(n/k)^{1/2}D_{i,n}\rightarrow\infty$, we therefore have that
\begin{equation}
{\frac{\sqrt{n}}{(n/k)^{1/2}D_{i,n}}}\int_{1/n}^{1-\epsilon}|\vartheta
_{n}(s)|d\Psi_{i}(s)=O_{\mathbf{P}}(1).
\label{stat-1d4}
\end{equation}
To complete the proof of statement (\ref{stat-1d}), we are left to
verify statement (\ref{stat-1d}) with the integral $\int_{1/n}^{1-k/n}$ replaced by $\int_{1-\epsilon}^{1-k/n}$, that is, we need to prove that
\begin{equation}
{\frac{\sqrt{n}}{(n/k)^{1/2}D_{i,n}}}\int_{1-\epsilon}^{1-k/n}\Big (\mathbf{E}%
_{\tau}\left[  (H_{i}\circ Q)'(s+\tau\vartheta_{n}(s))\right]
-(H_{i}\circ Q)'(s)\Big )\vartheta_{n}(s)d\Psi_{i}(s)=o_{\mathbf{P}}(1).
\label{stat-1d2}
\end{equation}
To this end, we first use the fact that $\sup_{1-\epsilon<s<1-k/n}{|\sqrt
{n}\,\vartheta_{n}(s)|/(1-s)^{1/2}}=O_{\mathbf{P}}(1)$, which follows from
statement (2.2) on page 40 of Cs\"{o}rg\H{o} et al. (1986) and the proof of statement (2.39) on page 49 of Cs\"{o}rg\H{o} et al. (1986). Then we use the fact that
\begin{equation}
\sup_{1-\epsilon\leq s\leq1-k/n}\bigg |{\frac{\mathbf{E}_{\tau}\left[
(H_{i}\circ Q)'(s+\tau\vartheta_{n}(s))\right]  }{(H_{i}\circ
Q)'(s)}}-1\bigg |=o_{\mathbf{P}}(1), \label{stat-1d6}%
\end{equation}
which is a consequence of Lemma 3 in Necir and Meraghni (2009). Hence, statement (\ref{stat-1d2})
holds if
\begin{equation}
{\frac{1}{(n/k)^{1/2}D_{i,n}}}\int_{1-\epsilon}^{1-k/n}(H_{i}\circ
Q)'(s)^{1/2}d\Psi_{i}(s)=O(1). \label{stat-1d8}%
\end{equation}
To verify statement (\ref{stat-1d8}), we use the fact that
$(1-s)(H_{i}\circ Q)'(s)/H_{i}\circ Q(s)$ is asymptotically bounded when
$s\uparrow1$. This allows us to replace the derivative $(H_{i}\circ
Q)'(s)$ by $(1-s)H_{i}\circ Q(s)$ and reduces the proof of statement (\ref{stat-1d8}) to showing that, when $n\rightarrow \infty $,
\begin{equation}
{\frac{1}{a(1-k/n)b(1-k/n)}}\int_{1-\epsilon}^{1-k/n}a(s)db(s)=O(1),
\label{stat-3b0}
\end{equation}
where $a(s)=H_{i}\circ Q(s)(1-s)^{\gamma_{i}-1/2}$
and $b(s)=\int_{s}^{1}(1-t)^{-\gamma_{i}}d\Psi_{i}(t)$. We shall establish statement (\ref{stat-3b0}) in Lemma \ref{lemma4.1} below, taking now the validity of the statement for granted. This concludes the proof of statement (\ref{stat-1d8}).

In summary, we have prove that
\begin{multline}
{\frac{\sqrt{n}}{(n/k)^{1/2}D_{i,n}}}\int_{1/n}^{1-k/n}\left(  H_{i}\circ
Q_{n}(s)-H_{i}\circ Q(s)\right)  d\Psi_{i}(s)\\
={\frac{\sqrt{n}}{(n/k)^{1/2}D_{i,n}}}\int_{1/n}^{1-k/n}(H_{i}\circ
Q)'(s)\vartheta_{n}(s)d\Psi_{i}(s)+o_{\mathbf{P}}(1).
\label{stat-1c}
\end{multline}
Our next task is to establish a weak approximation for the main term on the right-hand side of equation (\ref{stat-1c}). Namely, we shall show that
\begin{multline}
{\frac{\sqrt{n}}{(n/k)^{1/2}D_{i,n}}}\int_{1/n}^{1-k/n}(H_{i}\circ Q)^{\prime
}(s)\vartheta_{n}(s)d\Psi_{i}(s)\\
={\frac{-1}{(n/k)^{1/2}D_{i,n}}}\int_{1/n}^{1-k/n}(H_{i}\circ Q)^{\prime
}(s)B_{n}(s)d\Psi_{i}(s)+o_{\mathbf{P}}(1),
\label{stat-9e}
\end{multline}
where, defined on a possibly different probability space, $B_{n}$ are Brownian bridges such that (see statement (2.2) on page 40 of Cs\"{o}rg\H{o} et al., 1986) for
any $0\leq\nu<1/2$ we have
\begin{equation}
\sup_{1/n\leq s\leq(n-1)/n}n^{\nu}{\frac{|\sqrt{n}\,\vartheta_{n}%
(s)+B_{n}(s)|}{s^{1/2-\nu}(1-s)^{1/2-\nu}}}=O_{\mathbf{P}}(1)
\label{stat-cchm}
\end{equation}
when $n\rightarrow\infty$.

\begin{note}\rm
The need to change the original probability space into a new one does not affect our results. In fact, we can start working on the probability space of Cs\"{o}rg\H{o} et al. (1986) right at the very outset since all our results are `in probability.'
\end{note}

In view of (\ref{stat-cchm}), statement (\ref{stat-9e}) follows if
\begin{equation}
{\frac{n^{-\nu_{i}}}{(n/k)^{1/2}D_{i,n}}}\int_{1/n}^{1-k/n}(H_{i}\circ
Q)'(s)^{1/2-\nu_{i}}d\Psi_{i}(s)\rightarrow0,
\label{stat-9a}
\end{equation}
where $\nu_{i}\in [0,1/2)$ can be any, even though our arguments below will require
$\nu_{i}$ to be strictly positive, though no matter how small it could be.

To prove statement (\ref{stat-9a}), we employ the earlier noted fact that $(1-s)(H_{i}\circ Q)'(s)/H_{i}\circ Q(s)=O(1)$ when $s\uparrow1$, and reduce the proof to showing that
\begin{equation}
{\frac{1}{k^{\nu_{i}}}}\bigg ({\frac{\int_{1/n}^{1-k/n}H_{i}\circ
Q(s)(1-s)^{\gamma_{i}-1/2-\nu_{i}}(1-s)^{-\gamma_{i}}d\Psi_{i}(s)}{H_{i}\circ
Q(1-k/n)(k/n)^{\gamma_{i}-1/2-\nu_{i}}\int_{1-k/n}^{1}(1-s)^{-\gamma_{i}}%
d\Psi_{i}(s)}}\bigg )\rightarrow0.
\label{stat-10a}
\end{equation}
In view of statement (\ref{stat-3b0}), but now with the function $a(s)=H_{i}%
\circ Q(s)(1-s)^{\gamma_{i}-1/2-\nu_{i}}$ (see Lemma \ref{lemma4.1} below), we have that the ratio in the parentheses is asymptotically bounded. Since $\nu_{i}>0$ and  $k=k_{n}\rightarrow\infty$, statement (\ref{stat-10a}) holds and thus statement (\ref{stat-9a}) follows.

We conclude the proof of Theorem \ref{theor-1a} by noting that the
integral on the right-hand side of equation (\ref{stat-9e}) can be made into
$\ell_{i,n}(1;B_{n})$. For this we need to check that
\[
{\frac{-1 }{(n/k)^{1/2}D_{i,n} }}\int_{0}^{1/n} (H_{i}\circ Q)'(s)
B_{n}(s) d\Psi_{i}(s) = o_{\mathbf{P}}(1),
\]
but latter statement holds because the integral is asymptotically bounded in probability and $(n/k)^{1/2}D_{i,n}\to\infty$ when $n\to\infty$. This concludes the proof of Theorem \ref{theor-1a}.
\end{proof}

\begin{lemma}\label{lemma4.1}
Statement (\ref{stat-3b0}) holds with the functions $a(s)=H_{i}\circ Q(s)(1-s)^{\gamma_{i}-1/2-\nu_i}$ and $b(s)=\int_{s}^{1}(1-t)^{-\gamma_{i}}d\Psi_{i}(t)$ assuming that the parameter
$\nu_{i}$ is either zero or (strictly) positive but sufficiently small.
\end{lemma}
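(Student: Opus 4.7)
The plan is to combine integration by parts with Karamata's theorem, exploiting the regular variation of both $a$ and $b$. Since $H_{i}\circ Q$ is continuously differentiable by hypothesis, so is $a$, and integration by parts yields
\[
\int_{1-\epsilon}^{1-k/n} a(s)\, db(s) = a(1-k/n)\, b(1-k/n) - a(1-\epsilon)\, b(1-\epsilon) - \int_{1-\epsilon}^{1-k/n} b(s)\, a'(s)\, ds .
\]
Writing $\widetilde{a}(u)=a(1-u)$ and $\widetilde{b}(u)=b(1-u)$, the first-order condition (\ref{tag-2u}) together with the explicit factor $(1-s)^{\gamma_{i}-1/2-\nu_{i}}$ implies that $\widetilde{a}$ is regularly varying at $0$ with index $-1/2-\nu_{i}$, while the hypothesis of Theorem \ref{theor-1a} gives regular variation of $\widetilde{b}$ at $0$ with index $\kappa\in(0,1/2)$. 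Hence the product $a(1-k/n)\, b(1-k/n)$ is regularly varying at $k/n=0$ with the strictly negative index $\kappa-1/2-\nu_{i}$ and so diverges to infinity, making the fixed constant $a(1-\epsilon)\, b(1-\epsilon)$ negligible after division by $a(1-k/n)\, b(1-k/n)$. The proof thus reduces to showing that
\[
R_{n}:=\frac{\int_{1-\epsilon}^{1-k/n} b(s)\, a'(s)\, ds}{a(1-k/n)\, b(1-k/n)} = O(1).
\]

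To estimate $R_{n}$, I would first obtain a crude pointwise bound on $|a'(s)|$ by invoking the relation $(1-s)(H_{i}\circ Q)'(s)/(H_{i}\circ Q)(s) = O(1)$ as $s\uparrow 1$ — the same property already used just above equation (\ref{stat-3b0}) in the proof of Theorem \ref{theor-1a}. Applying the product rule to $a(s)=(H_{i}\circ Q)(s)(1-s)^{\gamma_{i}-1/2-\nu_{i}}$ then gives
\[
|a'(s)| \le C\, (H_{i}\circ Q)(s)\, (1-s)^{\gamma_{i}-3/2-\nu_{i}}
\]
on a left neighbourhood of $1$. Substituting this bound into the numerator of $R_{n}$ and changing variables $u=1-s$ produces
\[
\int_{1-\epsilon}^{1-k/n} b(s)\, |a'(s)|\, ds \le C \int_{k/n}^{\epsilon} \widetilde{b}(u)\, \widetilde{a}(u)\, u^{-1}\, du ,
\]
because $(H_{i}\circ Q)(1-u)\, u^{\gamma_{i}-3/2-\nu_{i}} = \widetilde{a}(u)/u$.

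The last step is Karamata's theorem. The integrand $u\mapsto\widetilde{b}(u)\widetilde{a}(u)/u$ is regularly varying at $0$ with index $\kappa-3/2-\nu_{i}$, which is strictly less than $-1$ when $\nu_{i}=0$ and remains so for all sufficiently small $\nu_{i}>0$. Karamata's theorem for integrals of functions whose index lies below $-1$ therefore yields
\[
\int_{k/n}^{\epsilon} \widetilde{b}(u)\, \widetilde{a}(u)\, u^{-1}\, du \sim \frac{\widetilde{a}(k/n)\,\widetilde{b}(k/n)}{1/2+\nu_{i}-\kappa} = O\big(a(1-k/n)\, b(1-k/n)\big) ,
\]
whence $R_{n}=O(1)$, completing the proof of statement (\ref{stat-3b0}). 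The main technical point I expect is justifying the pointwise bound on $|a'(s)|$; once the regular-variation estimate $(1-s)(H_{i}\circ Q)'(s)/(H_{i}\circ Q)(s)=O(1)$ is in hand, everything else reduces to a routine Karamata computation, and the condition $\kappa<1/2$ in Theorem \ref{theor-1a} is exactly what provides the slack allowing the small positive perturbation $\nu_{i}$.
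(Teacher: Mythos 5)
Your proof is correct and follows essentially the same route as the paper: integration by parts, regular-variation bookkeeping for $a$ and $b$ (with $\kappa<1/2$ providing exactly the needed slack on $\nu_i$), and a Karamata-theorem estimate of the resulting integral. The paper phrases the derivative control via the Karamata representation $\epsilon\alpha'(\epsilon)\sim c\,\alpha(\epsilon)$ and then cites Theorem B.1.5 of de Haan and Ferreira, whereas you obtain the same control by bounding $|a'(s)|$ explicitly through the product rule and the relation $(1-s)(H_i\circ Q)'(s)/(H_i\circ Q)(s)=O(1)$; these are interchangeable reformulations of the same argument.
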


\begin{proof}
To prove the lemma, we show that the limit
\begin{equation}
\lim_{\epsilon\downarrow0}{1\over \alpha(\epsilon)\beta(\epsilon)} \int_{\epsilon}^{1/2} \alpha
(s)d\beta(s)
\label{statem-this}
\end{equation}
is finite, where $\alpha(s)=H_{i}\circ Q(1-s) s^{\gamma_{i}-1/2-\nu_i}$ and $\beta(s)=\int_{1-s}^{1} (1-t)^{-\gamma_{i}}d\Psi_{i}(t)$. This reformulation of the problem helps us to connect the current proof with that of Lemma 1 in Necir and Meraghni (2010). Since $H_{i}\circ Q$ is differentiable and regularly varying at $1$ with the index $-\gamma_{i} <0$, the function $s\mapsto \alpha(s)$ is differentiable and regularly
varying at $0$ with the index $-\gamma_{i} + (\gamma_{i}-1/2-\nu_i)$.
Since the function $s\mapsto b(s)$ is regularly varying at $1$ with the index $\kappa> 0$ by an assumption of Theorem \ref{theor-1a}, the function $s\mapsto \beta(s)$ is regularly varying at $0$ with the index $\kappa$.
Consequently, the product function $s\mapsto \alpha(s)\beta(s)$ is regularly
varying at $0$ with the index $\kappa-1/2-\nu_i$, which is strictly negative for all sufficiently small $\nu_i$, and thus the product function converges to $+\infty$ when $s\downarrow0$. This fact and integration by parts formula imply that limit (\ref{statem-this}) is finite when the limit
\begin{equation}
\lim_{\epsilon\downarrow 0}{\frac{1 }{\alpha(\epsilon)\beta(\epsilon)}} \int_{\epsilon}^{1/2}\alpha'(s)\beta(s)ds
\label{statem-this-2}
\end{equation}
is such. Since $\epsilon \alpha'(\epsilon)=(-1/2)\alpha(\epsilon)(1+o(1))$ when $\epsilon\downarrow0$ by Karamata's representation (e.g., Proposition
B.1.9 (11) on page 367 of de Hann and Ferreira, 2006), limit (\ref{statem-this-2}) is finite when the limit
\begin{equation}
\lim_{\epsilon\downarrow0}{\frac{1 }{\epsilon\alpha'(\epsilon)\beta(\epsilon)}} \int_{\epsilon}^{1/2}\alpha'(s)\beta(s)ds
\label{stat-3b0ii}
\end{equation}
is such. The function $s\mapsto \alpha'(s)\beta(s)$ is regularly varying at $0$ with the index $(-1/2)-1+\kappa<0$, and thus Theorem B.1.5 on page 363 of de Haan and Ferreira (2006) implies the finiteness of limit (\ref{stat-3b0ii}). This proves the asymptotic boundedness of the right-hand side of equation (\ref{stat-3b0ii}) and  completes the proof of Lemma \ref{lemma4.1}.
\end{proof}

\begin{proof}[\textbf{Proof of Theorem \ref{theor-1ab}}]

We have
\begin{align}
\frac{\sqrt{n}(\widehat{L}_{i,n}(1)-L_{i,n}(1)}{(n/k)^{1/2}D_{i,n}}  &
={\frac{\sqrt{n}}{(n/k)^{1/2}D_{i,n}}}\int_{0}^{1-k/n}\left(  H_{i}\circ
Q_{n}(s)-H_{i}\circ Q(s)\right)  ds.\nonumber\\
&  ={\frac{\sqrt{n}}{(n/k)^{1/2}D_{i,n}}}\int_{1/n}^{1-k/n}\left(  H_{i}\circ
Q_{n}(s)-H_{i}\circ Q(s)\right)  ds+o_{\mathbf{P}}(1), \label{eq-start-1}%
\end{align}
where the second equality follows from statement (\ref{stat-1a}). Our
next step is based on the Vervaat process
\[
V_{n}(t)=\int_{0}^{t}\left(  H_{i}\circ Q_{n}(s)-H_{i}\circ Q(s)\right)
ds+\int_{-\infty}^{H_{i}\circ Q(t)}\left(  F_{n}\circ H_{i}^{-1}(x)-F\circ
H_{i}^{-1}(x)\right)  dx.
\]
We write the integral on the right-hand side of equation (\ref{eq-start-1})
as follows:
\begin{align}
\int_{1/n}^{1-k/n}
&\left(  H_{i}\circ Q_{n}(s)-H_{i}\circ Q(s)\right)  ds
\notag
\\
&=-\int_{H_{i}\circ Q(1/n)}^{H_{i}\circ Q(1-k/n)}\left(  F_{n}\circ H_{i}%
^{-1}(x)-F\circ H_{i}^{-1}(x)\right)  dx+V_{n}(1-k/n)-V_{n}(1/n)
\notag
\\
&=-\int_{H_{i}\circ Q(1/n)}^{H_{i}\circ Q(1-k/n)}e_{n}(F\circ H_{i}^{-1}(x))dx+V_{n}(1-k/n)-V_{n}(1/n),
\label{eq-0}
\end{align}
where $e_{n}(t)=\sqrt{n}\,(F_{n}\circ Q(t)-t)$ is the uniform empirical process.
It is known (see Zitikis, 1998; Davydov and Zitikis, 2003,
2004) that $V_{n}(t)$ is non-negative and does not exceed $-(F_{n}\circ
H_{i}^{-1}\circ H_{i}\circ Q(t))-t)(H_{i}\circ Q_{n}(t)-H_{i}\circ Q(t))$.
Since $F\circ H_{i}^{-1}$ is continuous by assumption, we therefore have the bound
\begin{equation}
\sqrt{n}\,V_{n}(t)\leq|e_{n}(t)||H_{i}\circ Q_{n}(t)-H_{i}\circ Q(t)|.
\label{eq-1}
\end{equation}
Consequently,
\begin{equation}
\frac{\sqrt{n}(\widehat{L}_{i,n}(1)-L_{i,n}(1)}{(n/k)^{1/2}D_{i,n}}%
=\frac{-1}{(n/k)^{1/2}D_{i,n}}\int_{H_{i}\circ Q(1/n)}^{H_{i}\circ Q(1-k/n)}e_{n}(F\circ H_{i}^{-1}(x))dx+O_{\mathbf{P}}(r_{n}^{*}+r_{n}^{**}),
\label{eq-4}
\end{equation}
where
\[
r_{n}^{*}={\frac{|e_{n}(1-k/n)||H_{i}\circ Q_{n}(1-k/n)-H_{i}\circ Q(1-k/n)|}{(n/k)^{1/2}D_{i,n}}}
\]
and
\[
r_{n}^{**}={\frac{|e_{n}(1/n)||H_{i}\circ Q_{n}(1/n)-H_{i}\circ Q(1/n)|}{(n/k)^{1/2} D_{i,n}}}.
\]
We have $r_{n}^{**}=o_{\mathbf{P}}(1)$ because
$|e_{n}(1/n)||H_{i}\circ Q_{n}(1/n)-H_{i}\circ Q(1/n)|=O_{\mathbf{P}}(1)$ and
$(n/k)^{1/2}D_{i,n}\rightarrow\infty$. To show that $r_{n}^{*}=o_{\mathbf{P}}(1)$, we shall next check that
\begin{equation}
\frac{|e_{n}(1-k/n)|}{(k/n)^{1/2}}\bigg |{\frac{H_{i}\circ Q_{n}(1-k/n)}%
{H_{i}\circ Q(1-k/n)}}-1\bigg |=o_{\mathbf{P}}(1). \label{eq-10}%
\end{equation}
For this, we write the bound
\begin{equation}
\frac{|e_{n}(1-k/n)|}{(k/n)^{1/2}}\leq\frac{|e_{n}(1-k/n)-B_{n}(1-k/n)|}%
{(k/n)^{1/2}}+\frac{|B_{n}(1-k/n)|}{(k/n)^{1/2}}. \label{eq-12}%
\end{equation}
The first summand on the right-hand side of bound (\ref{eq-12}) is of the
order $O_{\mathbf{P}}(1)$ due to Corollary 2.1 on page 48 of Cs\"{o}rg\H{o}
et al. (1986), which states that on an appropriate probability space and for any $0\leq\nu<1/4$, we have that
\begin{equation}
\sup_{1/n\leq s\leq1-1/n}n^{\nu}{\frac{|e_{n}(s)-B_{n}(s)|}{s^{1/2-\nu
}(1-s)^{1/2-\nu}}}=O_{\mathbf{P}}(1).
\label{str-approx}
\end{equation}
Statement (\ref{str-approx}) with $\nu=0$ implies that the first summand on the right-hand side of bound (\ref{eq-12}) is of the order $O_{\mathbf{P}}(1)$.

\begin{note}\rm
Compare statements (\ref{str-approx}) and (\ref{stat-cchm}): both of them use
same Brownian bridges $B_{n}$ and probability spaces as specified in
Cs\"{o}rg\H{o} et al. (1986).
\end{note}

The second summand on the right-hand side of bound (\ref{eq-12}) is also of the order $O_{\mathbf{P}}(1)$ due to a statement on page 49 of Cs\"{o}rg\H{o} et al. (1986): see the displayed bound there just below statement (2.39). Hence, statement (\ref{eq-10}) follows from
\begin{equation}
{\frac{H_{i}\circ Q_{n}(1-k/n)}{H_{i}\circ Q(1-k/n)}}=1+o_{\mathbf{P}}(1),
\label{eq-13}
\end{equation}
which we shall establish in Lemma \ref{lemma4.2} below.
In summary, we have proved that
\begin{equation}
\frac{\sqrt{n}(\widehat{L}_{i,n}(1)-L_{i,n}(1)}{(n/k)^{1/2}D_{i,n}}
=\frac{-1}{(n/k)^{1/2}D_{i,n}}\int_{H_{i}\circ Q(1/n)}^{H_{i}\circ Q(1-k/n)}e_{n}(F\circ H_{i}^{-1}(x))dx+o_{\mathbf{P}}(1).
\label{eq-4oo}
\end{equation}

We next replace the empirical process $e_{n}$ n the right-hand side of equation
(\ref{eq-4oo}) by an appropriate Brownian bridge $B_{n}$ with an error term of the order $o_{\mathbf{P}}(1)$. Statement (\ref{str-approx})
implies that the replacement of $e_{n}$ by $B_{n}$ is possible with an error  $o_{\mathbf{P}}(1)$, provided that the quantity
\[
\frac{1}{n^{\nu}(n/k)^{1/2}D_{i,n}}\int_{H_{i}\circ Q(1/n)}^{H_{i}\circ Q(1-k/n)}F\circ H_{i}%
^{-1}(x)^{1/2-\nu}(1-F\circ H_{i}^{-1}(x))^{1/2-\nu}dx
\]
converges to $0$ when $n\rightarrow\infty$. To show this, we first change the
variable of integration, then integrate by parts, and in this way reduce our task to
showing that, when $n\rightarrow\infty$,
\begin{equation}
\frac{1}{n^{\nu}(n/k)^{1/2}D_{i,n}}s^{1/2-\nu}(1-s)^{1/2-\nu}H_{i}\circ Q(s)|_{1/n}^{1-k/n} \to 0
\label{eq-re1}
\end{equation}
and
\begin{equation}
\frac{1}{n^{\nu}(n/k)^{1/2}D_{i,n}}\int_{1/n}^{1-k/n}s^{-1/2-\nu}
(1-s)^{-1/2-\nu}H_{i}\circ Q(s)ds \to 0.
\label{eq-re2}
\end{equation}

To prove statement (\ref{eq-re1}), we first recall that we are
currently dealing with the case $\Psi_i(s)=s$, and then replace $D_{i,n}$ by
$(k/n)H_{i}\circ Q(1-k/n)$ since, up to a constant, the two quantities are asymptotically equivalent. We have that
\begin{multline}
\frac{1}{n^{\nu}(k/n)^{1/2}H_{i}\circ Q(1-k/n)}s^{1/2-\nu}(1-s)^{1/2-\nu}H_{i}\circ Q(s)|_{1/n}^{1-k/n}
\\
=O\bigg (\frac{1}{k^{\nu}}\bigg )+O\bigg (\frac{H_{i}\circ Q(1/n)}{n^{1/2}(n/k)^{1/2}D_{i,n}}\bigg ),
\label{eq-8}
\end{multline}
which converges to $0$ because $k=k_{n}\rightarrow\infty$ and $(n/k)^{1/2}%
D_{i,n}\rightarrow\infty$ when $n\rightarrow\infty$.

To prove statement (\ref{eq-re2}), we use the notation
$\phi(u)=H_{i}\circ Q(1-u)/u^{1/2+\nu}$ and write
\begin{equation}
\frac{1}{n^{\nu}(k/n)^{1/2}H_{i}\circ Q(1-k/n)}
\int_{1/n}^{1-k/n}(1-s)^{-1/2-\nu}H_{i}\circ Q(s)ds
={\frac{1}{k^{\nu}}}{\frac{\int_{k/n}^{1-1/n}\phi(s)ds}{(k/n)\phi(k/n)}}.
\label{eq-9}
\end{equation}
The right-hand side converges to $0$ when $n\rightarrow\infty$ as seen from
Result 1 in Appendix of Necir and Meraghni (2009).

In summary, we have proved that
\begin{align}
\frac{\sqrt{n}(\widehat{L}_{i,n}(1)-L_{i,n}(1)}{(n/k)^{1/2}D_{i,n}}
&=\frac{-1}{(n/k)^{1/2}D_{i,n}}\int_{H_{i}\circ Q(1/n)}^{H_{i}\circ Q(1-k/n)}B_{n}\circ F\circ H_{i}^{-1}(x)dx
+o_{\mathbf{P}}(1)
\notag
\\
&  ={\frac{-1}{(n/k)^{1/2}D_{i,n}}}\int_{1/n}^{1-k/n}B_{n}(s)dH_{i}\circ
Q(s)+o_{\mathbf{P}}(1)
\notag
\\
&  ={\frac{-1}{(n/k)^{1/2}D_{i,n}}}\int_{0}^{1-k/n}B_{n}(s)dH_{i}\circ
Q(s)+o_{\mathbf{P}}(1).
\label{eq-6aa}
\end{align}
Upon noticing that the right-hand side of equation (\ref{eq-6aa}) is equal to $\ell
_{i,n}(1;B_{n})+o_{\mathbf{P}}(1)$, we complete the proof of Theorem \ref{theor-1ab}.
\end{proof}

\begin{lemma}\label{lemma4.2}
Statement (\ref{eq-13}) holds.
\end{lemma}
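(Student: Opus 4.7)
The plan is to reduce the ratio in (\ref{eq-13}) to a ratio of the regularly varying function $H_i\circ Q$ evaluated at two nearby points, and then invoke the uniform convergence theorem for regularly varying functions. First I would rewrite the numerator as $H_i\circ Q_n(1-k/n)=H_i\circ Q(U_{n-k:n})$, where $U_{n-k:n}=E_n^{-1}(1-k/n)$ is the $(n-k)$-th order statistic of an i.i.d. uniform sample. Thus, setting $T_n=(1-U_{n-k:n})/(k/n)$, the target ratio becomes
\[
\frac{H_i\circ Q(1-(k/n)T_n)}{H_i\circ Q(1-k/n)},
\]
and it suffices to show that $T_n\to_{\mathbf{P}} 1$ and then pass to the limit through the regularly varying function $H_i\circ Q$.

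For the first step, I would combine the strong approximation (\ref{stat-cchm}) with $\nu=0$, namely
\[
|\sqrt{n}\,\vartheta_n(1-k/n)+B_n(1-k/n)|=O_{\mathbf{P}}((k/n)^{1/2}),
\]
with the elementary bound $|B_n(1-k/n)|=O_{\mathbf{P}}((k/n)^{1/2})$ (since the variance of $B_n(1-k/n)$ is $(1-k/n)(k/n)\sim k/n$; this is also the bound noted below (2.39) in Cs\"orgő et al., 1986, and used in the proof of Theorem \ref{theor-1ab} to control the second summand in (\ref{eq-12})). These two bounds together give $\vartheta_n(1-k/n)=O_{\mathbf{P}}(\sqrt{k}/n)$, so that
\[
T_n=\frac{k/n-\vartheta_n(1-k/n)}{k/n}=1+O_{\mathbf{P}}(1/\sqrt{k})=1+o_{\mathbf{P}}(1),
\]
since $k=k_n\to\infty$ by (\ref{con-00e}).

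For the second step, I would invoke the uniform convergence theorem for regularly varying functions: since $H_i\circ Q$ is regularly varying at $1$ with index $-\gamma_i$ (condition (\ref{tag-2u})), the convergence
\[
\frac{H_i\circ Q(1-\epsilon s)}{H_i\circ Q(1-\epsilon)}\longrightarrow s^{-\gamma_i}\qquad (\epsilon\downarrow 0)
\]
is uniform for $s$ in any compact subset of $(0,\infty)$, in particular on a fixed neighborhood of $s=1$. Combining this uniform convergence with the fact that $T_n\to_{\mathbf{P}}1$ and $k/n\to 0$, one concludes that the displayed ratio converges in probability to $1^{-\gamma_i}=1$, which is exactly (\ref{eq-13}).

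The only delicate point is the bound $\vartheta_n(1-k/n)=O_{\mathbf{P}}(\sqrt{k}/n)$ at the moving endpoint $1-k/n$, but this is already a standard consequence of the Cs\"orgő-Cs\"orgő-Horv\'ath-Mason strong approximation (\ref{stat-cchm}), which the paper has been using throughout Section \ref{sect-4}. Once this bound is in hand, the rest is a routine application of the uniform convergence theorem for regular variation.
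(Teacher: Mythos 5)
Your proof is correct, and it reaches the same conclusion as the paper's but by a slightly different route at each of the two key steps. For showing that the intermediate order statistic behaves like $1-k/n$, the paper first exploits the distributional symmetry $\{1-E_n^{-1}(1-s)\}\,{=_d}\,\{E_n^{-1}(s)\}$ to reduce matters to $E_n^{-1}(k/n)$ near zero, and then invokes Wellner's (1978) sup-norm bound $\sup_{1/n\le s\le 1}s^{-1}|E_n^{-1}(s)-s|=o_{\mathbf P}(1)$ to get $(n/k)E_n^{-1}(k/n)=1+o_{\mathbf P}(1)$; you instead work directly at $1-k/n$ and obtain the stronger rate $\vartheta_n(1-k/n)=O_{\mathbf P}(\sqrt{k}/n)$ from the Cs\"org\H{o}--Cs\"org\H{o}--Horv\'ath--Mason strong approximation (\ref{stat-cchm}) already in play in Section \ref{sect-4}, which keeps the toolkit self-contained. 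For passing through $H_i\circ Q$, the paper applies Potter's inequality (note it mis-describes $s\mapsto H_i\circ Q(1-s)$ as slowly varying, though it is regularly varying with index $-\gamma_i$; the displayed Potter bound is correct), whereas you invoke the uniform convergence theorem for regular variation on a compact neighbourhood of $s=1$ — these two devices are essentially interchangeable here, and each correctly yields the limit $1^{-\gamma_i}=1$. Both versions are sound; yours is arguably tidier because it avoids importing Wellner's result, which the paper uses only for this lemma.
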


\begin{proof}
The distribution of $H_{i}\circ Q_{n}(1-k/n) $ is same as that
of $H_{i}\circ Q\left(  E_{n}^{-1}(1-k/n) \right)  $, where $E_{n}^{-1}$ is
the uniform empirical quantile function. Furthermore, the two processes $\{
1-E_{n}^{-1}( 1-s) ,\ 0\leq s\leq1\}$ and $\{ E_{n}^{-1}( s) ,\ 0\leq
s\leq1\}$ are equal in distribution. Hence, statement (\ref{eq-13})
is equivalent to
\begin{equation}
{\frac{H_{i}\circ Q\left(  1-E_{n}^{-1}(k/n)\right)  }{H_{i}\circ Q(1-k/n)}}
=1+o_{\mathbf{P}}(1).
\label{eq-13a}
\end{equation}
From the Glivenko-Cantelli theorem, $E_{n}^{-1}(k/n)-k/n\to0$ almost surely.
This also implies that $E_{n}^{-1}(k/n)\to0$ since $k/n\to0$. Moreover, by
Theorem 0 and Remark 1 of Wellner (1978), we have
$\sup_{1/n\leq s\leq1}s^{-1}\left\vert E_{n}^{-1}(s)-s\right\vert =o_{\mathbf{P}}(1)$. Hence,
\begin{equation}
(n/k)E_{n}^{-1}(k/n)=1+o_{\mathbf{P}}(1).
\label{eq-13b}
\end{equation}
Since $s\mapsto H_{i}\circ Q\left(  1-s\right)  $ is slowly varying at zero,
using Potter's inequality (see, e.g., the $5^{\text{th}}$ assertion of
Proposition B.1.9 on page 367 of de Haan and Ferreira (2006)) we have
\begin{equation}
\frac{H_{i}\circ Q\left(  1-E_{n}^{-1}(k/n)\right)  }{H_{i}\circ Q(1-k/n)
}=\left(  1+o_{\mathbf{P}}(1)\right)  \left(  (n/k)E_{n}^{-1}(k/n)\right)
^{-\gamma_{i}\pm\theta}
\label{eq-13c}
\end{equation}
for any $\theta\in(0,\gamma_{i} )$. In view of (\ref{eq-13b}), the right-hand
side of equation (\ref{eq-13c}) is equal to $1+o_{\mathbf{P}}(1)$. This
implies statement (\ref{eq-13a}) and thus, in turn, statement (\ref{eq-13}).
\end{proof}

\begin{proof}[\textbf{Proof of Theorem \ref{theor-1b}}]

Let $Y$ be a random variable with the cdf $G(z)=1-1/z$, $z\geq1$. Furthermore, let $\mathbb{U}_{i}(z)=H_{i}\circ Q(1-1/z)$. Since $G(Y)$ is uniform on the interval
$[0,1]$, the random variables $H_{i}(X)$ and $\mathbb{U}_{i}(Y)$ are
equal in distribution. We have the representations
\[
\widehat{L}_{i,n}(2) =\mathbb{U}_{i}(Y_{n-k:n}) \left(  \frac{k}%
{n}\right)  ^{\widehat{\gamma}_{i}} \int_{1-k/n}^{1}(1-s)^{-\widehat{\gamma
}_{i}}d\Psi_{i}(s)
\]
and
\[
D_{i,n} =\mathbb{U}_{i}(n/k)\left(  \frac{k}{n}\right)  ^{\gamma_{i}}
\int_{1-k/n}^{1} (1-s)^{-\gamma_{i}}d\Psi_{i}(s).
\]
Consequently,
\begin{align}
\frac{\sqrt{n}\,( \widehat{L}_{i,n}(2)-L_{i,n}(2)) }{(n/k)^{1/2}D_{i,n}}
= &
\sqrt{k}\,\left(  \frac{\mathbb{U}_{i}(Y_{n-k:n}) }{\mathbb{U}_{i}(n/k)
}-{\frac{\int_{0}^{1}H_{i}\circ Q(1-ks/n) d\Psi
_{i}(1-ks/n) }{H_{i}\circ Q(1-k/n) \int_{0}^{1} s^{-\gamma_{i}}d\Psi
_{i}(1-ks/n)}} \right)
\notag
\\
 =  &
\sqrt{k}\,\left(  \frac{\mathbb{U}_{i}(Y_{n-k:n}) }{\mathbb{U}_{i}(n/k)
}-\left(  \frac{Y_{n-k:n}}{n/k}\right)  ^{\gamma_{i}}\right)  {\frac{\left(
k/n\right)  ^{\widehat{\gamma}_{i}} \int_{1-k/n}^{1}(1-s)^{-\widehat
{\gamma}_{i}}d\Psi_{i}(s) }{\left(  k/n\right)  ^{\gamma_{i}}
\int_{1-k/n}^{1} (1-s)^{-\gamma_{i}}d\Psi_{i}(s) }}\nonumber\\
&  +\sqrt{k}\,\left(  \left(  \frac{Y_{n-k:n}}{n/k}\right)  ^{\gamma_{i}%
}-1\right)  {\frac{\left(  k/n\right)  ^{\widehat{\gamma}_{i}}
\int_{1-k/n}^{1}(1-s)^{-\widehat{\gamma}_{i}}d\Psi_{i}(s) }{(k/n)  ^{\gamma_{i}} \int_{1-k/n}^{1} (1-s)^{-\gamma_{i}}d\Psi_{i}(s)
}}\nonumber\\
&  + \sqrt{k}\, \left(  {\frac{\left(  k/n\right)  ^{\widehat{\gamma
}_{i}} \int_{1-k/n}^{1}(1-s)^{-\widehat{\gamma}_{i}}d\Psi_{i}(s) }{\left(
k/n\right)  ^{\gamma_{i}} \int_{1-k/n}^{1} (1-s)^{-\gamma_{i}}%
d\Psi_{i}(s) }} -1 \right) \nonumber\\
&  +\sqrt{k}\, \left(  1 -{\frac{\int_{0}^{1}H_{i}\circ Q(1-ks/n) d\Psi
_{i}(1-ks/n) }{H_{i}\circ Q(1-k/n) \int_{0}^{1} s^{-\gamma_{i}}d\Psi
_{i}(1-ks/n)}} \right)  .
\label{equa-1}
\end{align}
We shall next investigate the asymptotic behaviour of the four terms on the right-hand
side of equation (\ref{equa-1}).

\subsubsection*{First term on the RHS of equation (\ref{equa-1})}

We shall show that this term is of the order $o_{\mathbf{P}}(1)$, which follows from the two statements:
\begin{equation}
\sqrt{k}\left(  {\frac{\mathbb{U}_{i}(Y_{n-k:n})}{\mathbb{U}_{i}(n/k)}}-\left(  \frac{Y_{n-k:n}}{n/k}\right)^{\gamma_{i}}\right)  =o_{\mathbf{P}}(1)
\label{equa-10}
\end{equation}
and
\begin{equation}
{\frac{\left(  k/n\right)  ^{\widehat{\gamma}_{i}}\int_{1-k/n}%
^{1}(1-s)^{-\widehat{\gamma}_{i}}d\Psi_{i}(s)}{\left(  k/n\right)
^{\gamma_{i}}\int_{1-k/n}^{1}(1-s)^{-\gamma_{i}}d\Psi_{i}(s)}}=1+o_{\mathbf{P}}(1).
\label{equa-2}
\end{equation}
To prove statement (\ref{equa-10}), we note that since $H_{i}\circ Q$ is
regularly varying at $1$ with the index $(-\gamma_{i})$, the statement can be verified using analogous arguments as those used for proving  statement $T_{n1}=o_{\mathbf{P}}(1)$ by Necir et al. (2007). To check
statement (\ref{equa-2}), we note that we shall later establish a
non-degenerate limiting distribution of the third term on the right-hand side
of equation (\ref{equa-1}), which is a stronger statement than (\ref{equa-2}). With these notes we conclude our proof that the first term on the right-hand side of equation (\ref{equa-1}) is of the order $o_{\mathbf{P}}(1)$.

\subsubsection*{Second term on the RHS of equation (\ref{equa-1})}

Following the arguments on page 158 of Necir et al. (2007) and making notational adjustments, we have that
\begin{equation}
\sqrt{k}\,\left(  \left(  \frac{Y_{n-k:n}}{n/k}\right)  ^{\gamma_{i}%
}-1\right)  =\gamma_{i}\sqrt{{\frac{n}{k}}}\,\sqrt{n}\,\vartheta
_{n}(1-k/n)+o_{\mathbf{P}}(1), \label{equa-12-0}%
\end{equation}
where the (un-normalized) uniform quantile process $\vartheta_{n}$ is defined by equation (\ref{quant-process}). Utilizing weak approximation results of Cs\"{o}rg\H{o}
et al. (1986), we have Brownian bridges $B_n$ on an appropriate probability space such that
\begin{equation}
\sqrt{k}\,\left(  \left(  \frac{Y_{n-k:n}}{n/k}\right)  ^{\gamma_{i}%
}-1\right)  =-\gamma_{i}\sqrt{{\frac{n}{k}}}\,B_{n}(1-k/n)+o_{\mathbf{P}}(1).
\label{equa-12}
\end{equation}
This and also statement (\ref{equa-2}) imply that the second term on the right-hand side of equation (\ref{equa-1}) has the same asymptotic behaviour as the right-hand side of equation (\ref{equa-12}).

\subsubsection*{Third term on the RHS of equation (\ref{equa-1})}

We rewrite the term as follows:
\begin{align}
\sqrt{k}\,\left(  {\frac{\left(  k/n\right)  ^{\widehat{\gamma}_{i}%
}\int_{1-k/n}^{1}(1-s)^{-\widehat{\gamma}_{i}}d\Psi_{i}(s)}{\left( k/n\right)  ^{\gamma_{i}}\int_{1-k/n}^{1}(1-s)^{-\gamma_{i}}d\Psi_{i}(s)}}-1\right)
&  =\sqrt{k}\,\left(  {\frac{\int_{0}^{1}s^{-\widehat{\gamma}_{i}}
d\Psi_{i}(1-ks/n)}{\int_{0}^{1}s^{-\gamma_{i}}d\Psi_{i}(1-ks/n)}}-1\right)
\nonumber
\\
&  =\sqrt{k}\,{\frac{\int_{0}^{1}(  s^{-\widehat{\gamma}_{i}}-s^{-\gamma_{i}})  d\Psi_{i}(1-ks/n)}{\int_{0}^{1}s^{-\gamma_{i}}d\Psi_{i}(1-ks/n)}}.
\label{equa-12uu}
\end{align}
Theorems 2.3 and 2.4 of Cs\"{o}rg\H{o} and Mason (1985) imply that there are Brownian bridges $B_{n}$ on a probability space specified by Cs\"{o}rg\H{o} et al. (1986) such that
\begin{equation}
{\frac{\sqrt{k}\left(  \widehat{\gamma}_{i}-\gamma_{i}\right)  }{\gamma_{i}}}
=\sqrt{{\frac{n}{k}}}\,B_{n}(1-k/n)-\sqrt{{\frac{n}{k}}}\,
\int_{1-k/n}^{1}\frac{B_{n}(s)}{1-s}ds+o_{\mathbf{P}}(1).
\label{weak-approx-1}
\end{equation}
Hence, in particular,  $\widehat{\gamma}$ is a consistent estimator of $\gamma$, and so we have
\begin{multline}
\int_{0}^{1}\left(  s^{-\widehat{\gamma}_{i}}-s^{-\gamma_{i}}\right)
d\Psi_{i}(1-ks/n)=-\left(  \widehat{\gamma}_{i}-\gamma_{i}\right)  \int
_{0}^{1}s^{-\gamma_{i}}\log(s)d\Psi_{i}(1-ks/n)\\
+o_{\mathbf{P}}(1)\left\vert \widehat{\gamma}_{i}-\gamma_{i}\right\vert
\int_{0}^{1}s^{-\gamma_{i}-\delta}d\Psi_{i}(1-ks/n),
\label{equa-3}
\end{multline}
where $\delta>0$ can be any fixed number. Furthermore, in view of statement (\ref{weak-approx-1}) we have that $\sqrt{k}(\widehat{\gamma}_{i}-\gamma)=O_{\mathbf{P}}(1)$. This result,
assumptions (\ref{con-0b}) and (\ref{equa-10c}), and asymptotic representation  (\ref{equa-3}) imply that the right-hand side of equation (\ref{equa-12uu}) is equal to $-c_{i}\sqrt{k}(\widehat{\gamma}_{i}-\gamma_{i})+o_{\mathbf{P}}(1)$. Utilizing result (\ref{weak-approx-1}) in its full generality, we conclude that
\begin{multline}
\sqrt{k}\,\left(  {\frac{(k/n)^{\widehat{\gamma}_{i}}
\int_{1-k/n}^{1}(1-s)^{-\widehat{\gamma}_{i}}d\Psi_{i}(s)}{(k/n)^{\gamma_{i}}\int_{1-k/n}^{1}(1-s)^{-\gamma_{i}}d\Psi_{i}(s)}}-1\right)  =-\gamma_{i}c_{i}\sqrt{{\frac{n}{k}}}\,B_{n}(1-k/n)
\\
+\gamma_{i}c_{i}\sqrt{{\frac{n}{k}}}\,\int_{1-k/n}^{1}\frac{B_{n}(s)}{1-s}ds
+o_{\mathbf{P}}(1).
\label{equa-15}
\end{multline}
This determines the asymptotic behaviour of the third term on the right-hand side of equation (\ref{equa-1}) and also establishes the earlier noted statement (\ref{equa-2}).

\subsubsection*{Fourth term on the RHS of equation (\ref{equa-1})}

We rewrite the term as follows:
\begin{multline}
\sqrt{k}\,\left(  1-{\frac{\int_{0}^{1}H_{i}\circ Q(1-ks/n)d\Psi_{i}%
(1-ks/n)}{H_{i}\circ Q(1-k/n)\int_{0}^{1}s^{-\gamma_{i}}d\Psi_{i}(1-ks/n)}%
}\right) \\
={\frac{-\sqrt{k}}{\int_{0}^{1}s^{-\gamma_{i}}d\Psi_{i}(1-ks/n)}}
\int_{0}^{1}\left(  {\frac{H_{i}\circ Q(1-ks/n)}{H_{i}\circ
Q(1-k/n)}}-s^{-\gamma_{i}}\right)  d\Psi_{i}(1-ks/n).
\label{equa-10a}
\end{multline}
From Theorem 2.3.9 on page 48 of de Haan and Ferreira (2006) we have a function $\widetilde{A}_{i}$ with the property $\widetilde{A}_{i}(x)\sim A_{i}(x) $ when $x\rightarrow \infty $ such that for any $\delta>0$ there exists $\epsilon_{0,i}\in (0,1) $ such that, whenever $\epsilon s\leq \epsilon_{0,i}$,
\begin{equation}
\left\vert {\frac{1}{\widetilde{A}_{i}(1/\epsilon)}}\left(  {\frac{H_{i}\circ
Q(1-\epsilon s)}{H_{i}\circ Q(1-\epsilon)}}-s^{-\gamma_{i}}\right)  -s^{-\gamma_{i}}%
\frac{s^{-\omega_{i}}-1}{\omega_{i}}\right\vert \leq\delta s^{-\gamma_{i}-\omega
_{i}-\delta}.
\label{equa-11z}
\end{equation}
Upon recalling condition (\ref{con-0a}), we have $\sqrt{k}\,\widetilde{A}_{i}(n/k)  \rightarrow b_{i}$. Inequality (\ref{equa-11z}) implies that for all sufficiently large $n$ the right-hand side of equation (\ref{equa-10a}) is equal to
\begin{equation}
\bigg ( {\frac{-b_{i}}{\omega_{i}}} \bigg ) {\frac{\int_{0}^{1}s^{-\gamma_{i}}(s^{-\omega_{i}%
}-1)d\Psi_{i}(1-ks/n)}{\int_{0}^{1}s^{-\gamma_{i}}d\Psi_{i}(1-ks/n)}}+o(1)
\frac{\int_{0}^{1}s^{-\gamma_{i}-\omega
_{i}-\delta}d\Psi_{i}(1-ks/n)}{\int_{0}^{1}s^{-\gamma_{i}}d\Psi_{i}(1-ks/n)}.
\label{equa-11d}
\end{equation}
In view of assumptions (\ref{con-0c}) and (\ref{equa-10c}), we conclude that quantity (\ref{equa-11d}) and thus the fourth term on the right-hand side of equation (\ref{equa-1}) are of the order $-b_{i}d_{i}/\omega_{i}+o(1)$.

Collecting the above established asymptotic properties of the four terms on
the right-hand side of equation (\ref{equa-1}), we finish the proof of
Theorem \ref{theor-1b}.
\end{proof}

\section*{\bf Acknowledgments}

The research has been partially supported by
the Natural Sciences and Engineering Research Council (NSERC) of Canada.

\section*{\bf References}
\def\hang{\hangindent=\parindent\noindent}

\hang {Beirlant, J., Groegebeur, Y., Teugeles, J. (2004).}
\textit{Statistics of Extremes, Theory and Applications.}
Wiley, New York.

\hang
Brazauskas, V., Jones, B.L., Zitikis, R.\, (2007).
Robustification and performance evaluation of empirical risk measures
and other vector-valued estimators.
\textit{Metron -- International Journal of Statistics}, 65 (2), 175--199.

\hang
Brazauskas, V., Jones, B.L., Zitikis, R.\, (2009).
Robust fitting of claim severity distributions
and the method of trimmed moments.
\textit{Journal of Statistical Planning and Inference},
139 (6), 2028--2043.

\hang
Brazauskas, V., Jones, B.L., Puri, M.L., Zitikis, R.\, (2008).
Estimating conditional tail expectation with
actuarial applications in view.
\textit{Journal of Statistical Planning and Inference},
138 (11, Special Issue in Honor of Junjiro Ogawa:
Design of Experiments, Multivariate Analysis
and Statistical Inference), 3590--3604.

\hang
Castillo, E., Hadi, A. S., Balakrishnan, N., Sarabia, J. M. (2005).
\textit{Extreme Value and Related Models with Applications
in Engineering and Science}. Wiley, Hoboken, NJ.

\hang {Cs\"{o}rg\H{o}, M., Cs\"{o}rg\H{o}, S.,
Horv\'{a}th, L., Mason, D. M. (1986).} Weighted empirical and quantile
processes. \textit{Annals of Probability} 14, 31--85.

\hang {Davydov, Y., Zitikis, R. (2003).}
Generalized Lorenz curves and convexifications of stochastic processes
\textit{Journal of Applied Probability} 40, 906--925.

\hang {Davydov, Y., Zitikis, R. (2004).}
Convex rearrangements of random elements. In: \textit{Asymptotic Methods in Stochastics} (Eds.: L. Horv\'ath and B. Szyszkowicz), pp.~141--171. American Mathematical Society, Providence, RI.

\hang {Denneberg, D. (1994).}
\textit{Non-additive Measure and Integral}. Dordrecht, Kluwer.

\hang  Denuit, M., Dhaene, J., Goovaerts, M.J., Kaas, R. (2005).
\textit{Actuarial Theory for Dependent Risk: Measures, Orders and Models}.
Wiley, New York.

\hang {Furman, E., Zitikis, R. (2008).}
Weighted premium calculation principles. \textit{Insurance: Mathematics and
Economics} 42, 459--465.

\hang Furman, E., Zitikis, R. (2009).
Weighted pricing functionals with applications
to insurance: an overview.
\textit{North American Actuarial Journal} 13, 483--496.

\hang
Greselin, F., Pasquazzi, L.\, and Zitikis, R.\, (2010).
Zenga's new index of economic inequality,
its estimation, and an analysis of incomes in Italy.
\textit{Journal of Probability and Statistics}
(Special Issue on ``Actuarial and Financial Risks:
Models, Statistical Inference, and Case Studies''),
2010, Article ID 718905, 26 pages.

\hang
Greselin, F., Puri, M.L.\, and Zitikis, R.\, (2009).
$L$-functions, processes, and statistics
in measuring economic inequality and actuarial risks.
\textit{Statistics and Its Interface},
2 (2, Festschrift for Professor Joseph L. Gastwirth), 227--245.

\hang {de Haan, L., Ferreira, A. (2006).}
\textit{Extreme Value Theory: An introduction.} Springer, New York.

\hang {Hill, B.M. (1975)}. A simple approach
to inference about the tail of a distribution.
\textit{Annals of Statistics} 3, 1136-1174.

\hang
Jones, B.L., Zitikis, R.\, (2003).
Empirical estimation of risk measures and related quantities.
\textit{North American Actuarial Journal}, 7 (4), 44--54.
Discussion by V.~Brazauskas and T.~Kaiser
in \textit{North American Actuarial Journal}, 8 (3; 2004) 114--117.
Authors' Reply
in \textit{North American Actuarial Journal}, 8 (3; 2004) 117--118.

\hang {Necir, A., Meraghni, D. (2009).}
Empirical estimation of the proportional hazard premium for heavy-tailed claim
amounts. \textit{Insurance: Mathematics and Economics} 45, 49-58.

\hang {Necir, A., Meraghni, D. (2010).}
Estimating $L$-functionals for heavy-tailed distributions and applications.
\textit{Journal of Probability and Statistics}\ 2010, ID 707146.

\hang {Necir, A., Rassoul, A., Zitikis, R.
(2010).} Estimating the conditional tail expectation in the case of
heavy-tailed losses. \textit{Journal of Probability and Statistics} 2010, ID 956838.

\hang {Necir, A., Meraghni, D., Meddi, F. (2007).} Statistical estimate of the proportional hazard premium of
loss. \textit{Scandinavian Actuarial Journal} 3, 147--161.

\hang {Peng, L. (2001).} Estimating the mean
of a heavy tailed distribution. \textit{Statistics and Probability Letters}
52, 255--264.

\hang {Shorack, G.R., Wellner, J.A. (1986).}
\textit{Empirical Processes with Applications to Statistics.}
Wiley, New York.

\hang {Wang, S. (1995).} Insurance pricing and
increased limits ratemaking by proportional hazards transforms.
\textit{Insurance: Mathematics and Economics} 17, 43--54.

\hang {Wang, S. (1998).} An actuarial index of
the right-tail isk. \textit{North American Actuarial Journal} 2(2), 88--101.

\hang {Weissman, I., (1978).} Estimation of
parameters and large quantiles based on the $k$ largest observations.
\textit{Journal of American Statistical Association} 73, 812-815.

\hang
{Zenga M.} (1987).
Il contributo degli italiani allo studio della concentrazione.
In: \textit{La distribuzione personale del reddito: problemi di formazione,
di ripartizione e di misurazione} (Ed.: M. Zenga).
Vita e Pensiero, Milano.

\hang
{Zenga, M.} (2007).
Inequality curve and inequality index based
on the ratios between lower and upper arithmetic means.
\textit{Statistica \& Applicazioni} 5, 3--27.

\hang {Zitikis, R. (1998).} The Vervaat
process. In \textit{Asymptotic Methods in Probability and
Statistics} (Ed.: B. Szyszkowicz), pp.~667--694. Amsterdam: North-Holland.

\end{document}